\documentclass{article}
\usepackage{amssymb}
\usepackage{amsmath}
\usepackage[all]{xy}
\usepackage{hyperref}

\DeclareMathOperator{\Hom}{Hom}

\DeclareMathOperator{\Ker}{Ker}
\DeclareMathOperator{\Ima}{Im}
\DeclareMathOperator{\rad}{rad}

\DeclareMathOperator{\Mod}{Mod}

\DeclareMathOperator{\ssmod}{SSMod}
\DeclareMathOperator{\Soc}{Soc}

\DeclareMathOperator{\idom}{\mathfrak{In}}
\DeclareMathOperator{\pdom}{\mathfrak{Pr}}

\newcommand{\modR}{\Mod\text{-}R}
\DeclareMathOperator{\baer}{\mathbb{Z}^{\mathbb{N}}}

\DeclareMathOperator{\cat}{\mathbb{Z}^{(\mathbb{N})}}
\DeclareMathOperator{\Z}{\mathbb{Z}}

\newcommand{\subpr}[1]{\underline{\pdom}^{-1}(#1)}
\newcommand{\zhom}[2]{\Hom(#1,#2)}
\newcommand{\subinj}[1]{\underline{\idom}^{-1}(#1)}

\newtheorem{theorem}{Theorem}[section]
\newtheorem{prop}[theorem]{Proposition}
\newtheorem{definition}[theorem]{Definition}
\newtheorem{cor}[theorem]{Corollary}
\newtheorem{lemma}[theorem]{Lemma}

\newtheorem{example}[theorem]{Example}

\newenvironment{proof}[1][\it{Proof}]{\textbf{#1. } }{$\square$}

\begin{document}

\title{An alternative perspective on projectivity of modules}


\author{Chris Holston \\ \small{holston@ohio.edu} \\
 Sergio R. L\'opez-Permouth \\ \small{lopez@ohio.edu} \\
  Joseph Mastromatteo \\ \small{jm424809@ohio.edu} \\
   Jos\'e E. Simental-Rodr\'{i}guez\footnote{Current address: Department of Mathematics. Northeastern University. Boston, MA 02115. simentalrodriguez.j@husky.neu.edu} \\ \small{js224910@ohio.edu} \\
Department of Mathematics.\\
Ohio University. Athens, OH. 45701 USA.\vspace{10pt} \\
\it{Dedicated to the memory of our dear colleague and friend Francisco Raggi.}}





\date{\today}

\maketitle

\begin{abstract}
We approach the analysis of the extent of the projectivity of modules from a fresh perspective as we introduce the notion of relative subprojectivity. A module $M$ and is said to be {\em $N$-subprojective} if for every epimorphism $g:B \rightarrow N$ and homomorphism $f:M \rightarrow N$, there exists a homomorphism $h:M \rightarrow B$ such that $gh=f$. For a module $M$, the {\em subprojectivity domain of $M$} is defined to be the collection of all modules $N$ such that $M$ is $N$-subprojective. We consider, for every ring $R$, the subprojective profile of $R$, namely, the class of all subprojectivity domains for $R$ modules. We show that the subprojective profile of $R$ is a semilattice, and consider when this structure has coatoms or a smallest element. Modules whose subprojectivity domain is smallest as possible will be called {\em subprojectively poor} ({\em $sp$-poor}) or {\em projectively indigent} ({\em p-indigent}) and those with co-atomic subprojectivy domain are said to be {\em maximally subprojective}. While we do not know if $sp$-poor modules and maximally subprojective modules exist over every ring, their existence is determined for various families. For example, we determine that artinian serial rings have $sp$-poor modules and attain the existence of maximally subprojective modules over the integers and for arbitrary V-rings. This work is a natural continuation to recent papers that have embraced the systematic study of the injective, projective and subinjective profiles of rings. \\

\emph{Keywords:} Projective modules, Subprojectivity domain, Indigent modules.
\end{abstract}


\section{Introduction and Preliminaries}

The purpose of this paper is to initiate the study of an alternative perspective on the analysis of the projectivity of a module, as we introduce the notions of relative subprojectivity and assign to every module its subprojectivity domain.  A module is projective if and only if its subprojectivity domain consists of all modules. Therefore, at this extreme, there is no difference in the role played by the projectivity and subprojectivity domains.  Interesting things arise, however, when we focus on the subprojectivity domain of modules which are not projective.  It is easy to see that every module is subprojective relative to all projective modules, and one can show (Proposition \ref{basicfact2}) that projective modules are the only ones sharing the distinction of being in every single subprojectivity domain.  It is thus tempting to ponder the existence of modules whose subprojectivity domain consists precisely of only projective modules.  We refer to these modules as {\em $sp$-poor} or, to keep in line with \cite{pinar11}, we sometimes use the expression {\em p-indigent} .

This paper is inspired by similar ideas and notions studied in several papers. On the one hand, relative injectivity, injectivity domains and the notion of a {\it poor} module (modules with smallest possible injectivity domain) have been studied in \cite{lopez10}, \cite{lopez11} and \cite{lopez12}. Dually, relative projectivity, projectivity domains and the notion of a {\it p-poor} module have been studied in \cite{holston11} and \cite{lopez12}. On the other hand, in \cite{lopez12} the authors name a class of modules an $i$-portfolio (resp. $p$-portfolio) if it coincides with the injectivity (resp. projectivity) domain of some module. Then, they proceed to define the injective profile (resp. projective profile) of a ring $R$, an ordered structure consisting on all the $i$-portfolios (resp. $p$-portfolios) in $\modR$. In this paper, we study these concepts in the context of subinjectivity and subprojectivity domains, thus obtaining the ordered invariants $\mathfrak{siP}(R)$ and $\mathfrak{spP}(R)$, the subinjective and subprojective profile of $R$, respectively. We study some of its properties, such as the existence of coatoms and their relations with the lattice of torsion theories in $\modR$.

One of the first things that comes to the surface in this type of study is the potential existence of modules which are least injective or projective possible with respect to whichever measuring approach one may  be using. Injectively and projectively poor modules have been studied in \cite{lopez10}, \cite{lopez11}, \cite{lopez12} and \cite{holston11}. In \cite{pinar11}, Aydo\u{g}du and L\'opez-Permouth modify in a subtle yet significant way the notion of relative injectivity to obtain {\em relative subinjectivity}. They also study subinjectivity analogs of poor modules, calling them {\em indigent}. Here, we study the projective analog of relative subinjectivity and indigent modules. In order to emphasize the analogy between poor and indigent modules, we also call indigent modules {\em subinjectively poor}, or $si$-poor for short. In this same spirit, we call the subprojective analog of indigent modules either $p$-indigent or $sp$-poor.

We depict the different analogies between the different ways of measuring the injectivity and projectivity of a module in the following diagram.

$$
\xymatrix{*+[F]{\begin{tabular}{c} \, \; \; \; \text{Relative injectivity} \, \; \; \;  \\ \text{$i$-poor modules} \\ \text{$i$-portfolios} \\ $i\mathcal{P}(R)$ \end{tabular}} \ar@{-}[r] \ar@{-}[d] & *+[F]{\begin{tabular}{c} \, \; \; \; \text{Relative projectivity} \; \; \; \, \\ \text{$p$-poor modules} \\ \text{$p$-portfolios} \\ $p\mathcal{P}(R)$\end{tabular}} \ar@{-}[d] \\
*+[F]{\begin{tabular}{c}\text{Relative subinjectivity} \\ \text{indigent (= $si$-poor) modules} \\ \text{$si$-portfolios} \\ $\mathfrak{siP}(R)$ \end{tabular}} \ar@{-}[r] & *+[F]{\begin{tabular}{c} \text{Relative subprojectivity} \\ \text{$p$-indigent (= $sp$-poor) modules} \\ \text{$sp$-portfolios} \\ $\mathfrak{spP}(R)$ \end{tabular} }} 
$$


Before tackling the rest of the paper, we finish the section with a review of some of the needed background material.
Throughout, $R$ will denote an associative ring with identity and modules will be unital right $R$-modules, unless otherwise explicitely stated.  As usual, we denote by $\modR$ the category of right $R$-modules.  If $M$ is an $R$-module, then $\rad(M)$, $\Soc(M)$ and pr.dim$(M)$ will respectively denote the Jacobson radical, socle and projective dimension of $M$.  The Jacobson radical of a ring $R$ will be denoted by $J(R)$.  A ring $R$ is called a {\em right $V$-ring} if every simple $R$-module is injective; a {\em right hereditary ring} if submodules of projective modules are projective or, equivalently, if quotients of injective modules are injective; a {\em right perfect ring} if every module has a projective cover; a {\em semiprimary ring} if $J(R)$ is nilpotent and $R/J(R)$ is a semisimple artinian ring; and a {\em right coherent ring} if every finitely generated right ideal is finitely presented or, equivalently, if products of flat left $R$-modules are flat.

In \cite{lopez12} torsion theory is used as a tool in the study of relative injectivity and projectivity. Such notions are also employed here so, for easy reference, we recall them now. A torsion theory $\mathbb{T}$ is a pair of classes of modules $(\mathcal{T},\mathcal{F})$ such that (i) $\zhom{M}{N} = 0$ for every $M \in \mathcal{T}$, $N \in \mathcal{F}$; (ii) if $\zhom{A}{N} = 0$ for all $N \in \mathcal{F}$, then $A \in \mathcal{T}$; and (iii) if $\zhom{M}{B} = 0$ for all $M \in \mathcal{T}$, then $B \in \mathcal{F}$. In this situation, $\mathcal{T}$ and $\mathcal{F}$ are called the torsion class and torsion-free class of $\mathbb{T}$, respectively. A class of modules $\mathcal{T}$ is the torsion class of some torsion theory if and only if it is closed under quotients, extensions and arbitrary direct sums. Likewise, a class of modules $\mathcal{F}$ is the torsion-free class of some torsion theory if and only if it is closed under submodules, extensions and arbitrary direct products. If $\mathcal{C}$ is a class of modules, then $\mathbb{T}_{\mathcal{C}} := (\mathcal{T}_{\mathcal{C}}, \mathcal{F}_{\mathcal{C}})$, where $\mathcal{F}_{\mathcal{C}} = \{N \in \modR : \zhom{C}{N} = 0 \; \text{for every} \; C \in \mathcal{C}\}$ and $\mathcal{T}_{\mathcal{C}} = \{M \in \modR : \zhom{M}{N} = 0 \; \text{for every} \; N \in \mathcal{F}_{\mathcal{C}}\}$ is said to be the torsion theory generated by $\mathcal{C}$. Likewise, $\mathbb{T}^{\mathcal{C}} = (\mathcal{T}^{\mathcal{C}}, \mathcal{F}^{\mathcal{C}})$ where $\mathcal{T}^{\mathcal{C}} = \{M \in \modR : \zhom{M}{C} = 0 \; \text{for every} \; C \in \mathcal{C}\}$ and $\mathcal{F}^{\mathcal{C}} = \{N \in \modR : \zhom{M}{N} = 0  \; \text{for every} \; M \in \mathcal{T}^{\mathcal{C}}\}$ is said to be the torsion theory cogenerated by $\mathcal{C}$, \cite[Chapter VI]{stenstrom}. The torsion theory $\mathbb{T}_{\mathcal{C}}$ (resp. $\mathbb{T}^{\mathcal{C}})$ can also be characterized as the smallest torsion theory such that every object in $\mathcal{C}$ is torsion (resp. torsion-free). If $M \in \modR$, we write $\mathbb{T}_M$ and $\mathbb{T}^M$ for $\mathbb{T}_{\{M\}}$ and $\mathbb{T}^{\{M\}}$, respectively.

Recall that a module $M$ is said to be quasi-projective if it is projective relative to itself. Over a right perfect ring $R$, every quasi-projective module $M$ satisfies the following conditions:

\begin{tabular}{rl}
(D1) & For every submodule $A$ of $M$, there is a decomposition $M = M_1 \oplus M_2$ \\ &  such that $M_1 \leq A$ and $A \cap M_2 \ll M$. \\
(D2) & If $A \leq M$ is such that $M/A$ is isomorphic to a direct summand of $M$, \\ &  then $A$ is a direct summand of $M$. \\
(D3) &  If $M_1$ and $M_2$ are direct summands of $M$ with $M_1 + M_2 = M$, then \\ &  $M_1\cap M_2$ is a direct summand of $M$.
\end{tabular}

Modules satisfying (D1) are called lifting, see \cite{lifting}. Modules satisfying (D1) and (D2) are called discrete, while modules satisfying (D1) and (D3) are called quasi-discrete. Every discrete module is quasi-discrete, as it is the case that (D2) $\Rightarrow$ (D3), \cite[Lemma 4.6]{mohamed90}. It is not the case that every projective module is lifting, as, for example $\mathbb{Z}$ is not a lifting $\mathbb{Z}$-module. However, if $R$ is right perfect, then every projective module is discrete, cf. \cite[Theorem 4.41]{mohamed90}. Every quasi-discrete module decomposes as a direct sum of modules whose every submodule is superfluous, see \cite[Theorem 4.15]{mohamed90}. 

For additional concepts and results not mentioned here, we refer the reader to \cite{anderson92}. \cite{assem} and \cite{lam99}.

\section{Subprojectivity and the subprojectivity domain of a module}

\begin{definition}
Given modules $M$ and $N$, $M$ is said to be {\em $N$-subprojective} if for every epimorphism $g:B\rightarrow N$ and for every homomorphism $f:M\rightarrow N$, then there exists a homomorphism $h:M\rightarrow B$ such that $gh=f$.  The {\em subprojectivity domain}, or {\em domain of subprojectivity}, of a module $M$ is defined to be the collection
\[\subpr{M} \, := \{ \, N\in \modR \, : \, M \text{ is } N\text{-subprojective} \, \}.\]
\end{definition}

The domain of subprojectivity of a module is a measure of how projective that module is.  Just as with projectivity domains, a module $M$ is projective precisely when $\subpr{M}$ is as large as possible (i.e. equal to $\modR$.) 

Before we proceed, we need to introduce two additional notions.

\begin{definition}
Let $\mathcal{C} \subseteq \modR$. We say that $\mathcal{C}$ is a {\em subprojective-portfolio}, or {\em $sp$-portfolio} for short, if there exists $M \in \modR$ such that $\mathcal{C} = \subpr{M}$.
The class $ \mathfrak{spP}(R) := \{\mathcal{C} \subseteq \modR : \mathcal{C} \; \text{is an} \; sp\text{-portfolio}\}$ will be named the {\em subprojective profile}, or {\em $sp$-profile}, of $R$.
\end{definition}

Our first lemma says that, in order for $M$ to be $N$-subprojective, one only needs to lift maps to projective modules that cover $N$, to free modules that cover $N$ or even to a single projective module that covers $N$.

\begin{lemma}\label{onlyfrees}
Let $M, N \in \modR$. Then, the following conditions are equivalent.
\begin{enumerate}
\item $M$ is $N$-subprojective.
\item For every $f: M \rightarrow N$ and every epimorphism $g: P \rightarrow N$ with $P$ projective, there exists $h: M \rightarrow P$ such that $gh = f$. 
\item For every $f: M \rightarrow N$ and every epimorphism $g: F \rightarrow N$ with $F$ free, there exists $h: M \rightarrow F$ such that $gh = f$.
\item For every $f: M \rightarrow N$ there exists an epimorphism $g: P \rightarrow N$ with $P$ projective and a morphism $h: M \rightarrow P$ such that $gh = f$.
\end{enumerate}
\end{lemma}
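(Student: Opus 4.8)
The plan is to prove the cycle of implications $(1) \Rightarrow (2) \Rightarrow (4) \Rightarrow (1)$, together with $(2) \Leftrightarrow (3)$, so that all four conditions become equivalent. The implications $(1) \Rightarrow (2)$ and $(3) \Rightarrow (2)$ are immediate, since projective modules (and free modules) are special cases of the modules $B$ appearing in the definition of $N$-subprojectivity; likewise $(2) \Rightarrow (3)$ follows because every free module is projective, so condition $(2)$ applied to a free cover yields condition $(3)$. The implication $(2) \Rightarrow (4)$ is also trivial: given $f$, simply choose any epimorphism $g\colon P \to N$ with $P$ projective (for instance a free cover), and condition $(2)$ supplies the required $h$.

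\textbf{The main work is the implication $(4) \Rightarrow (1)$.} Here I am given an arbitrary epimorphism $g\colon B \to N$ and an arbitrary $f\colon M \to N$, and I must produce $h\colon M \to B$ with $gh = f$. By hypothesis $(4)$, there is some projective $P$ with an epimorphism $p\colon P \to N$ and a map $h'\colon M \to P$ satisfying $ph' = f$. The idea is to transfer the lift along $p$ into a lift along $g$ by using the projectivity of $P$ against the given epimorphism $g$. Precisely, since $P$ is projective and $g\colon B \to N$ is an epimorphism, the map $p\colon P \to N$ lifts through $g$: there exists $q\colon P \to B$ with $gq = p$. I would then set $h := q h'\colon M \to B$ and verify $gh = g q h' = p h' = f$, which is exactly what is required.

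The conceptual point worth isolating is that condition $(4)$ asserts liftability along \emph{some} projective epimorphism onto $N$, and the projectivity of that single witnessing module $P$ lets one promote the lift to \emph{every} epimorphism onto $N$; this is the step that does all the work, and it is the one I would write out carefully. The only subtlety to flag is ensuring that $q$ is chosen once and for all for the fixed $P$ and the given $g$ (its existence is exactly the lifting property of the projective module $P$ applied to the epimorphism $g$ and the map $p$), after which the composite $q h'$ depends on $f$ only through $h'$. I expect no genuine obstacle here; the argument is a short diagram chase, and the lemma is really a bookkeeping statement that reduces an apparently universal quantifier over all covering modules $B$ to the existence of a single convenient projective cover.
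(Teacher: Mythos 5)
Your main step, $(4) \Rightarrow (1)$, is correct and is exactly the paper's argument: take the witnessing projective epimorphism $p\colon P \to N$ with $ph'=f$, use projectivity of $P$ against the given epimorphism $g\colon B \to N$ to get $q\colon P \to B$ with $gq=p$, and set $h=qh'$. The only differences are in how the easy implications are routed: the paper uses the linear chain $(1) \Rightarrow (2) \Rightarrow (3) \Rightarrow (4) \Rightarrow (1)$, while you use $(1) \Rightarrow (2) \Rightarrow (4) \Rightarrow (1)$ together with $(2) \Leftrightarrow (3)$.

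That routing difference exposes one slip worth fixing. Your justification of $(3) \Rightarrow (2)$ --- that it is ``immediate, since projective modules (and free modules) are special cases of the modules $B$ appearing in the definition'' --- does not prove that implication; that reasoning establishes $(1) \Rightarrow (2)$ and $(1) \Rightarrow (3)$, but condition $(3)$ only provides lifts along epimorphisms out of \emph{free} modules, and an arbitrary projective epimorphism $g\colon P \to N$ is not of that form. The implication is true, but it needs an argument: either take a free cover $\pi\colon F \to P$, apply $(3)$ to the epimorphism $g\pi\colon F \to N$ to get $h'\colon M \to F$ with $g\pi h' = f$, and set $h = \pi h'$; or, more cheaply, observe that $(3) \Rightarrow (4)$ is genuinely immediate (every module admits a free cover), so $(3)$ joins the equivalence through your already-proved $(4) \Rightarrow (1) \Rightarrow (2)$ --- which is precisely how the paper avoids the issue. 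With that one repair your proof is complete and essentially identical to the paper's.
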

\begin{proof}
The implications $(1) \Rightarrow (2) \Rightarrow (3) \Rightarrow (4)$ are clear. To show $(4) \Rightarrow (1)$, assume $(4)$ and let $f: M \rightarrow N$ be a morphism and $\overline{g}: B \rightarrow N$ be an epimorphism. By $(4)$, there exist an epimorphism $g: P \rightarrow N$ and a morphism $h: M \rightarrow P$ such that $gh = f$. Since $P$ is projective, there exists a morphism $\overline{h}: P \rightarrow B$ such that $g = \overline{g}\overline{h}$. Then, $\overline{h}h: M \rightarrow B$ and $\overline{g}\overline{h}h = gh = f$. Hence, $M$ is $N$-subprojective.
\end{proof}

Using the preceding lemma, we can show that a module $M$ is projective if and only if it is $M$-subprojective, thus ruling out the possibility of a non-trivial subprojective analogue to the notion of quasi-projectivity.

\begin{prop}\label{basicfact1}
For any module $M$, the following are equivalent;

  \begin{enumerate}
    \item $M$ is projective.
    \item $M \in \subpr{M}$.
  \end{enumerate}
\end{prop}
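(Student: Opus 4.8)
The plan is to handle the two directions separately, with all the real content concentrated in $(2) \Rightarrow (1)$. The forward implication $(1) \Rightarrow (2)$ should be immediate: if $M$ is projective, then for every module $N$, every $f : M \to N$, and every epimorphism $g : B \to N$, the projectivity of $M$ directly supplies a lift $h : M \to B$ with $gh = f$. Hence $M$ is $N$-subprojective for every $N$, and in particular $N = M$ gives $M \in \subpr{M}$.

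For the substantive direction $(2) \Rightarrow (1)$, the idea is to exploit the fact that subprojectivity relative to $M$ quantifies over \emph{all} maps $f : M \to M$, and to feed the identity into the condition. Assuming $M \in \subpr{M}$, that is, $M$ is $M$-subprojective, I would invoke Lemma \ref{onlyfrees} to replace this by the equivalent condition $(4)$ with $N = M$: for every $f : M \to M$ there is an epimorphism $g : P \to M$ with $P$ projective and a morphism $h : M \to P$ such that $gh = f$. Applying this to $f = 1_M$ produces an epimorphism $g : P \to M$ with $P$ projective together with $h : M \to P$ satisfying $gh = 1_M$.

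The last step is then purely formal: the relation $gh = 1_M$ says precisely that $g$ is a split epimorphism and $h$ a section of it, so $M$ is (isomorphic to) a direct summand of $P$. Since $P$ is projective and direct summands of projective modules are projective, $M$ is projective, completing the argument.

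I do not anticipate any genuine obstacle here, since Lemma \ref{onlyfrees} has already done the work of reducing $M$-subprojectivity to a liftability statement against a single projective cover. The only conceptual point to get right is the choice $f = 1_M$, which is exactly what converts an abstract lifting property into the concrete splitting that exhibits $M$ as a retract of a projective module; everything else is a routine application of the fact that retracts of projectives are projective.
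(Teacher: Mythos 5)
Your proposal is correct and follows exactly the paper's own argument: both use Lemma \ref{onlyfrees}(4) with $N = M$ and $f = 1_M$ to realize $M$ as a direct summand of a projective module, hence projective. No differences worth noting.
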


\begin{proof}
The implication $(1)\Rightarrow (2)$ is clear. For $(2)\Rightarrow (1)$, put $M = N$ and $f = 1_M$, the identity morphism on $M$ in the condition of Lemma \ref{onlyfrees} (4), to see that $M$ is a direct summand of a projective module. Hence, $M$ is projective.
\end{proof}

Some modules can be shown easily to belong to a subprojectivity domain.

\begin{prop}\label{hom=0}
If $\Hom_R(M,A)=0$, then $A \in \subpr{M}$.
\end{prop}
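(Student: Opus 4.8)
The plan is to exploit the hypothesis $\Hom_R(M,A)=0$ directly, since it sharply restricts the maps that the subprojectivity condition has to handle. Recall that, by definition, to show $A \in \subpr{M}$ I must verify that $M$ is $A$-subprojective; that is, for every epimorphism $g : B \rightarrow A$ and every homomorphism $f : M \rightarrow A$, I must produce an $h : M \rightarrow B$ with $gh = f$. First I would observe that the hypothesis $\Hom_R(M,A)=0$ means there is exactly one candidate for $f$, namely the zero map, so the quantifier over $f$ collapses to a single trivial case.

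With $f = 0$ fixed, the lifting problem becomes trivial: for any epimorphism $g : B \rightarrow A$, I would simply take $h : M \rightarrow B$ to be the zero map. Then $gh = g \circ 0 = 0 = f$, so the required factorization holds. Since $g$ was arbitrary, $M$ is $A$-subprojective, which is precisely the statement $A \in \subpr{M}$.

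Honestly, there is no genuine obstacle here; the entire content is the recognition that $\Hom_R(M,A)=0$ forces $f=0$, after which the zero homomorphism serves as the lift regardless of the choice of epimorphism $g$. One could phrase the argument even more economically by noting that Lemma \ref{onlyfrees} reduces the verification to a single projective cover of $A$, but this reduction is unnecessary since the zero lift works uniformly for every $B$ without any appeal to projectivity of the domain of $g$.
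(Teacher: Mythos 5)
Your proof is correct and follows exactly the paper's own argument: the hypothesis $\Hom_R(M,A)=0$ forces any $f:M\rightarrow A$ to be zero, and then the zero homomorphism $h:M\rightarrow B$ satisfies $gh=0=f$ for any epimorphism $g$. The only difference is that you make the collapse of the quantifier over $f$ explicit, which the paper leaves implicit.
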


\begin{proof}
If $\Hom_R(M,A)=0$, then given any epimorphism $g:C \rightarrow A$ if we let $h:M \rightarrow C$ be the zero mapping then $gh=0$.  Whence $A \in \subpr{M}$.
\end{proof}

As an easy consequence of Proposition \ref{hom=0}, we have the following.

\begin{cor}\label{cor to hom=0}
Let $M$ and $A$ be right $R$-modules. Then,
	\begin{enumerate}
		\item If $\rad(M)=M$ and $\rad(A)=0$, then $M$ is $A$-subprojective.
		\item If $M$ is singular and $A$ is nonsingular, then $M$ is $A$-subprojective.
		\item If $M$ is semisimple and $\Soc(A) = 0$, then $M$ is $A$-subprojective.
	\end{enumerate}
\end{cor}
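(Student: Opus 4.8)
The plan is to reduce all three parts to Proposition \ref{hom=0}. In each case the hypotheses on $M$ and $A$ are exactly the kind that force $\Hom_R(M,A)=0$, and once this vanishing is established the conclusion $A \in \subpr{M}$ (equivalently, that $M$ is $A$-subprojective) is immediate from that proposition. So the whole proof amounts to showing, for an arbitrary $f \in \Hom_R(M,A)$, that $f=0$, and the three parts differ only in which standard functorial property I invoke to do so: preservation of the radical, of singularity, and of semisimplicity, respectively.

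For (1) I would use that every homomorphism $f:M\rightarrow A$ satisfies $f(\rad M)\subseteq \rad A$. Since $\rad M = M$ by hypothesis, this gives $f(M)=f(\rad M)\subseteq \rad A = 0$, so $f=0$. For (2) I would invoke that a homomorphic image of a singular module is again singular; thus $f(M)$ is a singular submodule of $A$, and since $A$ is nonsingular its only singular submodule is $0$, forcing $f(M)=0$. For (3) I would use that a homomorphic image of a semisimple module is semisimple, so $f(M)$ is a semisimple submodule of $A$ and hence contained in $\Soc(A)=0$, again giving $f=0$. In all three situations $\Hom_R(M,A)=0$, and Proposition \ref{hom=0} finishes the argument.

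The statement is billed as an easy consequence, and indeed there is no genuine obstacle of a computational kind: because the relevant $\Hom$-group vanishes, the lift required by the definition of subprojectivity can always be taken to be the zero map, so no homomorphism $h:M\rightarrow B$ needs to be built by hand. The only point demanding care is citing the three preservation facts in the correct direction — that homomorphisms send radical into radical, and carry singular (resp. semisimple) modules to singular (resp. semisimple) images — together with the complementary facts that a nonsingular module has no nonzero singular submodule and that every semisimple submodule lies in the socle. These are the three ingredients that convert each hypothesis into the single common conclusion $\Hom_R(M,A)=0$.
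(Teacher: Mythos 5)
Your proof is correct and follows exactly the route the paper intends: the corollary is stated there as an immediate consequence of Proposition \ref{hom=0}, with each hypothesis forcing $\Hom_R(M,A)=0$ via the same three standard facts you cite (radical into radical, singular images singular, semisimple images inside the socle). Nothing is missing and nothing differs in substance.
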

Proposition \ref{hom=0} is also instrumental in figuring out the next example, where we see that sometimes the conditioned study in that Proposition actually characterizes certain 
subprojectivity domains.  This subject will be picked up again later in Proposition \ref{basic}.

\begin{example}\label{rationals}
We see that in the category of $\mathbb{Z}$-modules, $\subpr{\mathbb{Q}}$ consists precisely of the abelian groups granted by Proposition \ref{hom=0} for, if there is a nonzero morphism $f: \mathbb{Q} \rightarrow M$, let $\pi: F \rightarrow M$ be an epimorphism with $F$ free. Since there are no nonzero morphisms from $\mathbb{Q}$ to $F$, we cannot lift $f$ to a morphism $\mathbb{Q} \rightarrow F$, so $M \not\in \subpr{\mathbb{Q}}$. Consequently, the subprojectivity domain of $\mathbb{Q}$ consists precisely of the class of reduced abelian groups. Note that a similar technique can be used to find the subprojectivity domain of any divisible abelian group. We further explore this phenomena in Section 3 of this paper. 
\end{example}

 It is a natural question to ask how small $\subpr{M}$ can be.  The next proposition shows that the domain of subprojectivity of any module must contain at least the projective modules, and the projective modules are the only ones that belong to all $sp$-portfolios.

\begin{prop}\label{basicfact2}
The intersection $\bigcap\subpr{M}$, running over all $R$-modules $M$, is precisely $\{P \in \modR \,|\, P \text{ is projective} \}$.
\end{prop}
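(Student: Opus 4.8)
The plan is to establish the asserted equality by proving the two inclusions separately, with essentially all the real content deferred to Proposition \ref{basicfact1}. The easier direction is to show that every projective module lies in the intersection, that is, that an arbitrary module $M$ is $P$-subprojective whenever $P$ is projective. First I would fix an epimorphism $g : B \rightarrow P$ and a homomorphism $f : M \rightarrow P$. Since $P$ is projective, the epimorphism $g$ splits, so there is a section $s : P \rightarrow B$ with $gs = 1_P$. Setting $h = sf : M \rightarrow B$ then yields $gh = gsf = f$, which shows $P \in \subpr{M}$. As $M$ was arbitrary, $P$ belongs to $\bigcap \subpr{M}$, and so the intersection contains every projective module.

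For the reverse inclusion, suppose $N$ lies in $\subpr{M}$ for every module $M$; I want to conclude that $N$ is projective. The decisive move is simply to specialize the universally quantified hypothesis to the single instance $M = N$, which gives $N \in \subpr{N}$, i.e.\ $N$ is $N$-subprojective. By Proposition \ref{basicfact1}, this already forces $N$ to be projective. Hence the intersection consists of projective modules only, and together with the first inclusion this proves the two classes coincide.

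The hard part will be nothing new here: the entire substance of the argument is absorbed into Proposition \ref{basicfact1} (which itself rests on the equivalence $(2)\Leftrightarrow(4)$ of Lemma \ref{onlyfrees}), so once that is in hand the present statement is immediate. The only points that warrant a moment's care are the correct use of the splitting of epimorphisms onto a projective module in the first inclusion, and the observation that since the hypothesis ``$N \in \subpr{M}$ for all $M$'' ranges over \emph{all} modules, the choice $M = N$ is legitimate and suffices to trigger Proposition \ref{basicfact1}.
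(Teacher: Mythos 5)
Your proof is correct and follows exactly the paper's own argument: the splitting of an epimorphism onto a projective module gives one inclusion, and specializing the hypothesis to $M = N$ together with Proposition \ref{basicfact1} gives the other. No gaps.
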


\begin{proof}
To show the containment $\subseteq$, suppose $M$ is a module which is subprojective relative to all $R$-modules. Then in particular, $M \in \subpr{M}$. So by Proposition \ref{basicfact1}, $M$ is projective.

To show the containment $\supseteq$, let $P$ be a projective module and $M$ be any $R$-module. Let $g:B \rightarrow P$ be an epimorphism and $f:M \rightarrow P$ be a homomorphism. Now since $P$ is projective $g$ splits and so there exists a homomorphism $k:P \rightarrow B$ such that $gk=1_P$. Then $g(kf)=(gk)f=f$ and so by definition $P \in \subpr{M}$.  Since $M$ was arbitrary the result follows.
\end{proof}

Proposition \ref{basicfact2} provides a lower bound on how small the domain of subprojectivity of a module can be.  If a module does achieve this lower bound, then we will call it suprojectively-poor.

\begin{definition}
A module $M$ is called {\em subprojectively poor}, {\em $sp$-poor}, or {\em $p$-indigent}, if its subprojectivity domain consists of only the projective modules.
\end{definition}

Notice that it is not clear whether $sp$-poor modules over a ring $R$ must exist.  Section 4 will be devoted to this problem, but first we go deeper into our study of subprojectivity.

The following several propositions show that subprojectivity domains behave nicely with respect to direct sums.

\begin{prop}\label{subprojectivity domains of sums}
Let $\{M_i\}_{i \in I}$ be a set of $R$-modules. Then, $\subpr{\bigoplus_{i \in I} M_i}=\bigcap_{i \in I} \subpr{M_i}$, that is, the subprojectivity domain of a direct sum is the intersection of the subprojectivity domains of the summands.
\end{prop}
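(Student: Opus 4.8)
The plan is to prove the two inclusions separately, and in both directions the only tool needed is the universal property of the direct sum together with the canonical injections $\iota_i: M_i \hookrightarrow \bigoplus_{j \in I} M_j$ and projections $\pi_i: \bigoplus_{j \in I} M_j \to M_i$, which satisfy $\pi_i \iota_i = 1_{M_i}$.

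For the inclusion $\subseteq$, I would fix an index $i$ and show that any $N \in \subpr{\bigoplus_{j} M_j}$ also lies in $\subpr{M_i}$. Given a homomorphism $f: M_i \to N$ and an epimorphism $g: B \to N$, I would feed the composite $f\pi_i: \bigoplus_{j} M_j \to N$ into the $N$-subprojectivity of the direct sum to obtain a lift $h: \bigoplus_{j} M_j \to B$ with $gh = f\pi_i$. Precomposing with $\iota_i$ then yields a map $h\iota_i: M_i \to B$ satisfying $g(h\iota_i) = f\pi_i\iota_i = f$, so $M_i$ is $N$-subprojective.

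For the reverse inclusion $\supseteq$, I would fix $N \in \bigcap_{i} \subpr{M_i}$, a homomorphism $f: \bigoplus_{j} M_j \to N$, and an epimorphism $g: B \to N$. Restricting $f$ to each summand gives $f\iota_i: M_i \to N$, which by hypothesis lifts through $g$ to some $h_i: M_i \to B$ with $gh_i = f\iota_i$. The universal property of the coproduct assembles the family $(h_i)_{i \in I}$ into a single homomorphism $h: \bigoplus_{j} M_j \to B$ determined by $h\iota_i = h_i$ for all $i$. To finish I would verify $gh = f$ by checking the two maps agree on each summand: $gh\iota_i = gh_i = f\iota_i$ for every $i$, and since a homomorphism out of a direct sum is determined by its restrictions to the summands, $gh = f$ follows.

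The argument is essentially formal, so I do not anticipate a genuine obstacle; the one point requiring care is the well-definedness of the assembled map $h$ in the second direction, which is precisely where the coproduct (as opposed to product) structure is used: a family of maps out of the summands always glues to a unique map out of the direct sum, with no compatibility condition to check.
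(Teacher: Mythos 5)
Your proposal is correct and follows essentially the same argument as the paper: for the inclusion $\subseteq$ you lift $f\pi_i$ and precompose the resulting map with the inclusion $\iota_i$, and for $\supseteq$ you lift each restriction $f\iota_i$ and assemble the lifts into a single map out of the direct sum, exactly as in the paper's proof (where the assembled map is written $\bigoplus_{i\in I} h_i$). The only difference is cosmetic: you invoke the universal property of the coproduct explicitly, which makes the well-definedness of the glued map $h$ slightly more transparent.
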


\begin{proof}
To show the containment $\subseteq$, let $N$ be in the subprojectivity domain of $\bigoplus_{i \in I} M_i$ and fix $j \in I$.  Let $g:B\rightarrow N$ be an epimorphism and $f:M\rightarrow N$ be a homomorphism.  Let $p_j:\bigoplus_{i \in I} M_i \rightarrow M_j$ denote the projection map and $e_j:M_j \rightarrow \bigoplus_{i \in I} M_i$ denote the inclusion map. Since $N \in \subpr{\bigoplus_{i \in I} M_i}$, then there exists a homomorphism $h^\prime :\bigoplus_{i \in I} M_i \rightarrow B$ such that $gh^\prime =fp_j$.  Letting $h := h^\prime e_j:M_j \rightarrow B$, then it is straightforward to check that $gh=f$.  Whence, $N \in \subpr{M_j}$.

To show the containment $\supseteq$, let $N$ be in the subprojectivity domain of $M_i$ for every $i \in I$, let $g:B \rightarrow N$ be an epimorphism and let $f:\bigoplus_{i \in I}M_i \rightarrow N$ be a homomorphism.  Since for each $j \in I$, $N \in \subpr{M_j}$ then $\exists h_j:M_j \rightarrow B$ such that $fe_j=gh_j$.  Letting $h := \bigoplus_{i \in I} h_i:\bigoplus_{i \in I} M_i \rightarrow B$, then 

\[gh=\bigoplus_{i \in I} gh_i=\bigoplus_{i \in I} fe_i=f\bigoplus_{i \in I} e_i=f.\]

Hence, $N \in \subpr{\oplus_{i\in I} M_i}$.
\end{proof}

Note that Proposition \ref{subprojectivity domains of sums} tells us that $\mathfrak{spP}(R)$ is a semilattice with a biggest element, namely $\modR$, the subprojectivity domain of any projective $R$-module. In view of Proposition \ref{basicfact2}, $\mathfrak{spP}(R)$ has a smallest element if and only if $R$ has an $sp$-poor module.
\begin{prop}\label{sums in subprojectivity domain}
If $N \in \subpr{M}$, then every direct summand of $N$ is in $\subpr{M}$.
\end{prop}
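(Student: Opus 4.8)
The plan is to reduce the statement to the case of a single complementary summand: it suffices to show that if $N = N_1 \oplus N_2$ and $N \in \subpr{M}$, then $N_1 \in \subpr{M}$, since every direct summand of $N$ arises as such an $N_1$ for a suitable complement $N_2$. I would fix such a decomposition, with inclusion $\iota \colon N_1 \to N$ onto the first coordinate. To verify directly from the definition that $M$ is $N_1$-subprojective, I would begin with an arbitrary epimorphism $g \colon B \to N_1$ and an arbitrary homomorphism $f \colon M \to N_1$, and aim to produce a lift $h \colon M \to B$ with $gh = f$.

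The key idea is to pad $g$ with the identity on the complementary summand. I would form the homomorphism $g \oplus 1_{N_2} \colon B \oplus N_2 \to N_1 \oplus N_2 = N$; since $g$ is surjective and $1_{N_2}$ is surjective, this map is an epimorphism onto $N$. Composing $f$ with the inclusion yields a homomorphism $\iota f \colon M \to N$. Because $N \in \subpr{M}$, the module $M$ is $N$-subprojective, so $\iota f$ lifts along the epimorphism just constructed: there exists $h' \colon M \to B \oplus N_2$ with $(g \oplus 1_{N_2}) h' = \iota f$.

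It then remains to extract the desired lift. Writing $h'$ in coordinates as $(h_1, h_2)$ with $h_1 \colon M \to B$ and $h_2 \colon M \to N_2$, the equation $(g \oplus 1_{N_2}) h' = \iota f$ reads $(g h_1,\, h_2) = (f,\, 0)$, so that $g h_1 = f$ (and incidentally $h_2 = 0$). Thus $h := h_1$ is the required lift, which shows $N_1 \in \subpr{M}$, and the same argument with the roles of the summands exchanged gives $N_2 \in \subpr{M}$.

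There is no serious obstacle here; the entire content is the observation that an epimorphism onto a summand can be enlarged to an epimorphism onto the whole module by adjoining the identity on the complement, after which $N$-subprojectivity of $M$ does the rest. One could alternatively restrict attention to projective (or free) covers of $N_1$ via Lemma \ref{onlyfrees}, but working directly with an arbitrary epimorphism $g$ is equally short and avoids invoking that lemma.
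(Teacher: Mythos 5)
Your proof is correct and follows essentially the same route as the paper's: both pad the epimorphism $g$ with the identity on a complement of the summand (the paper writes the complement as $N/A$, you as $N_2$) to get an epimorphism onto $N$, lift $\iota f$ using $N \in \subpr{M}$, and then recover the desired lift by projecting to the first coordinate. Your coordinate-wise extraction $(gh_1, h_2) = (f, 0)$ is just a slightly more explicit phrasing of the paper's projection argument.
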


\begin{proof}
Suppose $A$ is a direct summand of $N$, and let $g:C \rightarrow A$ be an epimorphism and $f:M \rightarrow A$ be a homomorphism.  Consider the epimorphism $g \oplus 1:C \oplus N/A \rightarrow A \oplus N/A \cong N$, where $1:N/A \rightarrow N/A$ is the identity map.  Since $N \in \subpr{M}$, then there exists a homomorphism $\hat{h}:M \rightarrow C \oplus N/A$ such that $(g \oplus 1)\hat{h}=ef$, where $e:A \rightarrow N$ is the inclusion map.  Therefore,
\[g(p\hat{h})=p(g \oplus 1)\hat{h}=p(ef)=f,\]
where $p:N \rightarrow A$ denotes the projection map.  Hence, $A \in \subpr{M}$.
\end{proof}

\begin{prop}\label{closure of finite direct sums}
If $A_i\in \subpr{M}$ for $i\in \{1,\cdots,m\}$, then $\bigoplus_{i=1}^m A_i \in \subpr{M}$.
\end{prop}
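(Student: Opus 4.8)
The plan is to verify the criterion furnished by condition (4) of Lemma \ref{onlyfrees} rather than attempting to lift through an arbitrary epimorphism onto $\bigoplus_{i=1}^m A_i$. Working with (4) is the decisive simplification: it lets me build a convenient projective module that surjects onto the direct sum, instead of being handed an uncontrolled $B$, and this is exactly what makes the finite case go through.

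So I would start with an arbitrary homomorphism $f : M \to \bigoplus_{i=1}^m A_i$ and break it into its components. Writing $\pi_i$ for the canonical projection onto $A_i$, set $f_i := \pi_i f : M \to A_i$. Since each $A_i \in \subpr{M}$, Lemma \ref{onlyfrees}(4) supplies, for every $i$, a projective module $P_i$, an epimorphism $g_i : P_i \to A_i$, and a homomorphism $h_i : M \to P_i$ with $g_i h_i = f_i$.

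Next I would assemble these data into a single lifting datum. Put $P := \bigoplus_{i=1}^m P_i$; because the index set is finite, $P$ is again projective. The direct sum $g := \bigoplus_{i=1}^m g_i : P \to \bigoplus_{i=1}^m A_i$ is an epimorphism, being a direct sum of epimorphisms. Finally, using the universal property of the product $\prod_{i=1}^m P_i$, which for a finite index set coincides with the coproduct $P = \bigoplus_{i=1}^m P_i$, the components $h_i$ determine one homomorphism $h : M \to P$ with $\pi_i^{P} h = h_i$. A componentwise check then gives $\pi_i^{A}\, g h = g_i h_i = f_i = \pi_i^{A} f$ for each $i$, so $gh = f$, and Lemma \ref{onlyfrees} yields $\bigoplus_{i=1}^m A_i \in \subpr{M}$.

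The only genuinely delicate point, and the reason the statement is restricted to finitely many summands, is the construction of $h$. The lifts $h_i$ are chosen independently of one another, so for $x \in M$ the tuple $(h_i(x))_i$ need not have finite support; over an infinite index set this would force $h$ to land in $\prod_i P_i$ rather than in $\bigoplus_i P_i$, and the argument would collapse, since a product of projectives need not be projective. With only finitely many summands this obstacle disappears entirely: product and coproduct agree, and a finite direct sum of projectives is projective, so $P$ is a legitimate projective cover to feed into Lemma \ref{onlyfrees}(4).
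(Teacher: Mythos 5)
Your proof is correct, but it follows a genuinely different route from the paper's. The paper proceeds by induction, reducing to $m=2$, and works directly with an \emph{arbitrary} epimorphism $g:C \rightarrow A \oplus B$: it first lifts the $A$-component to get $h_1$ with $p_A g h_1 = p_A f$, observes that the discrepancy $gh_1 - f$ has image in $0 \oplus B$, and then uses $B$-subprojectivity relative to the restricted epimorphism $g^{-1}(0\oplus B) \rightarrow B$ to produce a correction term $h_2$, finally setting $h = h_1 - h_2$. Your argument instead exploits the freedom granted by Lemma \ref{onlyfrees}(4): you never touch an arbitrary epimorphism, you choose convenient projective covers $g_i: P_i \rightarrow A_i$, lift componentwise, and assemble the lifts into a single map into $\bigoplus_{i=1}^m P_i$, which is projective and surjects onto $\bigoplus_{i=1}^m A_i$. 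What your approach buys: it handles all $m$ uniformly with no induction and no correction trick, and it isolates precisely where finiteness enters --- the tuple $(h_i(x))_i$ lands in the product, which coincides with the coproduct only for finitely many summands --- which nicely explains the paper's later remark that closure under arbitrary direct sums is unknown. What the paper's approach buys: it is a successive-approximation technique performed inside a fixed, uncontrolled epimorphism, a method that is reusable in settings where one cannot replace the given cover (and it needs only the definition, not the equivalence of Lemma \ref{onlyfrees}). One terminological quibble: at the end you call $P$ a ``projective cover'' of the direct sum; in this paper (and in the literature on perfect rings) that phrase has the technical meaning of an epimorphism with superfluous kernel, so you should say only that $P$ is a projective module mapping onto $\bigoplus_{i=1}^m A_i$.
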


\begin{proof}
By induction, it is sufficient to prove the proposition when $m=2$.  Let $g:C \rightarrow A \oplus B$ be an epimorphism and $f:M \rightarrow A \oplus B$ be a homomorphism.  Since $A\in \subpr{M}$, then there exists a homomorphism $h_1:M \rightarrow C$ such that $p_Agh_1=p_Af$, where $p_A:A \oplus B \rightarrow A$ is the projection map.  So $p_A(gh_1-f)=0$ and hence $\Ima(gh_1-f) \subset 0 \oplus B \cong B$.  Since $B \in \subpr{M}$, then there exists a homomorhism $h_2:M \rightarrow g^{-1}(0 \oplus B) \subset C$ such that $gh_2=gh_1-f$.  Let $h:=h_1-h_2$.  Then
\[gh=gh_1-gh_2=gh_1-gh_2-f+f=gh_2-gh_2+f=f.\]
Hence, $A \oplus B \in \subpr{M}$.
\end{proof}

\begin{prop}
If $M$ is finitely generated and $A_i$-subprojective for every $i\in I$, then $M$ is $\bigoplus_{i\in I} A_i$-subprojective.
\end{prop}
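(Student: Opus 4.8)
The plan is to reduce everything to the easiest of the equivalent conditions in Lemma \ref{onlyfrees}, namely condition (4), which only requires that for each $f\colon M\to \bigoplus_{i\in I}A_i$ we produce \emph{some} projective module mapping onto $\bigoplus_{i\in I}A_i$ through which $f$ factors. The decisive observation is that finite generation of $M$ confines the image of any such $f$ to a finite subsum. Concretely, if $m_1,\dots,m_n$ generate $M$, then each $f(m_j)$, as an element of the direct sum $\bigoplus_{i\in I}A_i$, has finite support; the union $F$ of these supports is a finite subset of $I$, and $f(M)\subseteq \bigoplus_{i\in F}A_i =: N_F$. Writing $\bigoplus_{i\in I}A_i = N_F\oplus N_{F'}$ with $N_{F'}=\bigoplus_{i\notin F}A_i$, we may thus corestrict $f$ to a homomorphism $f'\colon M\to N_F$ with $f=\iota\,f'$, where $\iota\colon N_F\to \bigoplus_{i\in I}A_i$ is the canonical inclusion.

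Next I would invoke the finite case already in hand. Since $M$ is $A_i$-subprojective for every $i\in F$ and $F$ is finite, Proposition \ref{closure of finite direct sums} gives $N_F\in\subpr{M}$, i.e. $M$ is $N_F$-subprojective. Applying Lemma \ref{onlyfrees}(4) to the map $f'\colon M\to N_F$ yields an epimorphism $g'\colon P'\to N_F$ with $P'$ projective together with a morphism $h'\colon M\to P'$ satisfying $g'h'=f'$. It remains to promote this lifting over the finite subsum to a lifting over the whole sum.

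To do so I would choose any epimorphism $q\colon Q\to N_{F'}$ with $Q$ projective (e.g. from a free module onto $N_{F'}$) and form $g'\oplus q\colon P'\oplus Q\to N_F\oplus N_{F'}=\bigoplus_{i\in I}A_i$, which is an epimorphism whose source $P'\oplus Q$ is projective. Defining $h\colon M\to P'\oplus Q$ by $h=\binom{h'}{0}$ (that is, $h(m)=(h'(m),0)$), one checks $(g'\oplus q)\,h = \iota\,g'h' = \iota\,f' = f$. Hence by Lemma \ref{onlyfrees}(4), $M$ is $\bigoplus_{i\in I}A_i$-subprojective. The only genuinely substantive step is the first one, where finite generation is used to trap $f(M)$ in a finite subsum; everything afterward is the bookkeeping of assembling a projective cover and a compatible lift. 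This also clarifies why the finiteness hypothesis cannot simply be dropped: without it the image of $f$ may spread across infinitely many summands, placing it beyond the reach of Proposition \ref{closure of finite direct sums}.
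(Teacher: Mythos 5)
Your proof is correct and rests on the same two pillars as the paper's own argument: finite generation of $M$ traps $f(M)$ inside a finite subsum $\bigoplus_{j\in J}A_j$, and Proposition \ref{closure of finite direct sums} then supplies the lift over that subsum. The only divergence is in the final bookkeeping --- the paper lifts directly against an arbitrary given epimorphism $g\colon C\to\bigoplus_{i\in I}A_i$ (implicitly by restricting it to $g^{-1}\bigl(\bigoplus_{j\in J}A_j\bigr)$), whereas you construct the block epimorphism $g'\oplus q\colon P'\oplus Q\to\bigoplus_{i\in I}A_i$ from projectives and appeal to Lemma \ref{onlyfrees}(4) --- and both finishes are equally valid.
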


\begin{proof}
Let $f:M\rightarrow \bigoplus_{i\in I} A_i$ be a homomorphism and $g:C\rightarrow \bigoplus_{i\in I} A_i$ be an epimorphism.  Let $X:=\{m_1,\ldots,m_k\}$ be a set of generators for $M$.  Then there exists a finite index set $J\subset I$ such that $f(X)\subset \bigoplus_{j\in J} A_j$.  By Proposition \ref{closure of finite direct sums}, there exists a homomorphism $h:M\rightarrow C$ such that $gh(m_i)=f(m_i)$ for all $i\in \{1, \ldots, k\}$.  Since $X$ generates $M$, then $gh=f$, as hoped.
\end{proof}

 
We don't know if the subprojectivity domain of a module is, in general, closed under arbitrary direct sums. However, we have some information regarding when it is closed under arbitrary direct products. If the subprojectivity domain of every module is closed under products, then, by Proposition \ref{basicfact2}, the class of projective modules is closed under products. By \cite[Theorem 3.3]{chase60}, this means that $R$ is a right perfect, left coherent ring. This condition is enough to ensure that subprojectivity domains are closed under products.

\begin{prop}\label{closure under products}
Let $R$ be a ring. The following conditions are equivalent.
\begin{enumerate}
\item $R$ is a right perfect, left coherent ring.
\item The subprojectivity domain of any right $R$-module is closed under arbitrary products.
\end{enumerate}
\end{prop}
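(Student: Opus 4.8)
The plan is to prove the two implications separately, using the theorem of Chase cited just before the statement, which characterizes right perfect, left coherent rings as precisely those rings over which the class of projective right modules is closed under arbitrary direct products.

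For the implication $(1) \Rightarrow (2)$, I would fix a module $M$ and a family $\{N_\lambda\}_{\lambda \in \Lambda}$ of modules in $\subpr{M}$, and aim to show $\prod_\lambda N_\lambda \in \subpr{M}$. Using the criterion of Lemma \ref{onlyfrees}(4), it suffices, given a homomorphism $f: M \rightarrow \prod_\lambda N_\lambda$, to produce a single projective module $P$ with an epimorphism onto $\prod_\lambda N_\lambda$ through which $f$ factors. For each $\lambda$, compose $f$ with the projection $\pi_\lambda$ to get $f_\lambda: M \rightarrow N_\lambda$; since $N_\lambda \in \subpr{M}$, Lemma \ref{onlyfrees} gives a projective $P_\lambda$, an epimorphism $g_\lambda: P_\lambda \rightarrow N_\lambda$, and a lift $h_\lambda: M \rightarrow P_\lambda$ with $g_\lambda h_\lambda = f_\lambda$. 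The natural candidate is then $P := \prod_\lambda P_\lambda$ with the product epimorphism $g := \prod_\lambda g_\lambda : \prod_\lambda P_\lambda \rightarrow \prod_\lambda N_\lambda$ (a product of epimorphisms is an epimorphism), and the lift $h := \prod_\lambda h_\lambda$ induced by the universal property of the product, so that $gh = f$. The key point that makes this work is precisely hypothesis $(1)$: by Chase's theorem, $P = \prod_\lambda P_\lambda$ is projective, so it is a legitimate witness for Lemma \ref{onlyfrees}(4) and therefore $\prod_\lambda N_\lambda \in \subpr{M}$.

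For the implication $(2) \Rightarrow (1)$, I would reduce to Chase's theorem by showing the class of projective modules is closed under products. Take any family $\{P_\lambda\}_{\lambda \in \Lambda}$ of projective modules. By Proposition \ref{basicfact2}, each $P_\lambda$ lies in $\subpr{M}$ for every module $M$; choosing $M = \prod_\lambda P_\lambda$, hypothesis $(2)$ gives $\prod_\lambda P_\lambda \in \subpr{\prod_\lambda P_\lambda}$. By Proposition \ref{basicfact1}, this forces $\prod_\lambda P_\lambda$ to be projective. Hence the class of projective modules is closed under arbitrary products, and Chase's theorem yields that $R$ is right perfect and left coherent.

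The main obstacle, and the place requiring the most care, is verifying in $(1) \Rightarrow (2)$ that the product map $g = \prod_\lambda g_\lambda$ is genuinely an epimorphism and that the induced $h = \prod_\lambda h_\lambda$ satisfies $gh = f$ componentwise; both are routine applications of the universal property of products but should be stated cleanly. The essential conceptual input, rather than any delicate calculation, is the invocation of Chase's theorem to guarantee projectivity of the infinite product $\prod_\lambda P_\lambda$ — this is exactly why the perfect-plus-coherent hypothesis is both sufficient and necessary, and it is the hinge on which the equivalence turns.
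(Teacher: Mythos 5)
Your proposal is correct and follows essentially the same route as the paper: for $(1) \Rightarrow (2)$ it lifts $f$ componentwise to a product of projectives, invokes Chase's theorem for projectivity of $\prod_\lambda P_\lambda$, and concludes via Lemma \ref{onlyfrees}(4); for $(2) \Rightarrow (1)$ it deduces closure of the class of projectives under products from Propositions \ref{basicfact2} and \ref{basicfact1} and applies Chase's theorem, just as the paper does in the paragraph preceding the statement. The only cosmetic difference is that you pin down projectivity of $\prod_\lambda P_\lambda$ via Proposition \ref{basicfact1} with the specific choice $M = \prod_\lambda P_\lambda$, whereas the paper argues through the intersection characterization of Proposition \ref{basicfact2}; these are interchangeable.
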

\begin{proof}
(2) $\Rightarrow$ (1) follows from the discussion on the preceding paragraph. For (1) $\Rightarrow$ (2), let $M \in \modR$ and let $\{N_{\lambda}\}_{\lambda \in \Lambda}$ be a set of modules in $\subpr{M}$. Let $f = (f_{\lambda})_{\lambda \in \Lambda}: M \rightarrow \prod_{\lambda \in \Lambda} N_{\lambda}$. For every $\lambda \in \Lambda$, be $g_{\lambda}: P_{\lambda} \rightarrow N_{\lambda}$ be an epimorphism with $P_{\lambda}$ projective. By hypothesis, there exists $h_{\lambda}: M \rightarrow P_{\lambda}$ such that $f_{\lambda} = g_{\lambda}h_{\lambda}$. Let $h = (h_{\lambda})_{\lambda \in \Lambda}: M \rightarrow \prod_{\lambda \in \Lambda} P_{\lambda}$, and $g: \prod_{\lambda \in \Lambda} P_{\lambda} \rightarrow \prod{\lambda \in \Lambda} N_{\lambda}$ be defined by $g((x_{\lambda})_{\lambda \in \Lambda}) = (g_\lambda(x_\lambda))_{\lambda \in \Lambda}$. It is routine to check that $g$ is an epimorphism and that $gh = f$. Note that, since $R$ is right perfect and left coherent, $\prod_{\lambda \in \Lambda} P_{\lambda}$ is projective. By Lemma \ref{onlyfrees} (4), $M$ is $\prod_{\lambda \in \Lambda}N_{\lambda}$-subrojective.
\end{proof}

 Similarly, recall that $R$ is said to be right perfect if submodules of projective modules are projective. If every $sp$-portfolio is closed under submodules, then $R$ must be right hereditary. The next proposition tells us that the converse of this statement is also true. 
 
 \begin{prop}\label{Rrighthereditary}
 Let $R$ be a ring. The following conditions are equivalent.
 \begin{enumerate}
 \item $R$ is right hereditary.
 \item The subprojectivity domain of any right $R$-module is closed under submodules
 \end{enumerate}
 \end{prop}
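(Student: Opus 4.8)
The plan is to establish both implications, with essentially all of the content living in (1) $\Rightarrow$ (2); the reverse direction is just a clean formalization of the remark recorded in the paragraph preceding the statement. For (2) $\Rightarrow$ (1) I would show directly that every submodule of a projective module is projective. Let $P$ be projective and $K \leq P$. By Proposition \ref{basicfact2} every projective module belongs to every subprojectivity domain, so in particular $P \in \subpr{K}$. Applying the hypothesis that $\subpr{K}$ is closed under submodules to $K \leq P$ gives $K \in \subpr{K}$, and then Proposition \ref{basicfact1} forces $K$ to be projective. Hence $R$ is right hereditary.

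For (1) $\Rightarrow$ (2), fix a module $M$, a module $N \in \subpr{M}$, and a submodule $N' \leq N$ with inclusion $i : N' \hookrightarrow N$; the goal is $N' \in \subpr{M}$. By Lemma \ref{onlyfrees} (4) it suffices, for an arbitrary $f : M \to N'$, to produce an epimorphism from a projective module onto $N'$ through which $f$ factors. First I would feed the composite $i f : M \to N$ into the hypothesis $N \in \subpr{M}$: Lemma \ref{onlyfrees} (4) yields an epimorphism $g : P \to N$ with $P$ projective together with a map $h : M \to P$ satisfying $g h = i f$. The decisive step is then to pass to the preimage $P' := g^{-1}(N') \leq P$. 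Right hereditarity of $R$ guarantees that $P'$, as a submodule of the projective module $P$, is again projective; the restriction $g|_{P'} : P' \to N'$ is onto because $g$ is; and since $g(h(m)) = i(f(m)) \in N'$ for every $m \in M$, the map $h$ in fact takes values in $P'$, giving $h' : M \to P'$ with $(g|_{P'}) h' = f$. A final appeal to Lemma \ref{onlyfrees} (4) then delivers $N' \in \subpr{M}$.

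The only genuine obstacle is isolating the correct factorization in (1) $\Rightarrow$ (2): after lifting $i f$ along $g : P \to N$, one must recognize that $g^{-1}(N')$ is exactly the projective object to map into and that $h$ automatically factors through it. Once this is seen, right hereditarity is precisely the assumption needed to make $g^{-1}(N')$ projective, and the remaining verifications — surjectivity of $g|_{P'}$ and the identity $(g|_{P'}) h' = f$ — are immediate from $g$ being epic and $i$ being an inclusion, so I would not belabor them.
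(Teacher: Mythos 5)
Your proof is correct and follows essentially the same route as the paper's: for (1) $\Rightarrow$ (2) you lift along an epimorphism from a projective, pass to the preimage $g^{-1}(N')$, invoke hereditarity for its projectivity, and conclude via Lemma \ref{onlyfrees}(4), exactly as the paper does. Your (2) $\Rightarrow$ (1) is just a slightly more explicit unwinding of the paper's appeal to Proposition \ref{basicfact2} (you additionally cite Proposition \ref{basicfact1} to recognize $K \in \subpr{K}$ as projectivity), which is the same underlying argument.
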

 \begin{proof}
 (2) $\Rightarrow$ (1). If the subprojectivity domain of any right $R$-module is closed under submodules then, by Proposition \ref{basicfact2}, the class of projective modules is closed under submodules. Then, $R$ is right hereditary. For (1) $\Rightarrow$ (2), let $M$ be a right $R$-module, $K \in \subpr{M}$ and $N \leq K$. Let $f: M \rightarrow N$. We can consider $f$ as a morphism from $M$ to $K$ with image in $N$. Let $g: P \rightarrow K$ be an epimorphism with $P$ projective. Then, there exists $h: M \rightarrow P$ such that $gh = f$. Now let $P' = g^{-1}(N) \leq P$. Since $R$ is right hereditary, $P'$ is projective. Note that $h(M) \leq P'$, and $g(P') = N$. By Proposition \ref{onlyfrees} (4), $N \in \subpr{M}$.
 \end{proof}




In general, the subprojectivity domain of a module is not closed with respect to quotients.  Consider for example the $\mathbb{Z}$-modules, $M=\mathbb{Z}/(2)$, $A=\mathbb{Z}$, and $B=2\mathbb{Z}$.  Since $A$ and $B$ are projective, then by Proposition \ref{basicfact2} $A,B \in \subpr{M}$.  But $M=A/B$ is not projective, so by Proposition \ref{basicfact1} $A/B \notin \subpr{M}$. Using similar arguments, note that $\subpr{M}$ is closed under quotients if and only if $M$ is projective.

\section{Subprojectivity domains and torsion-free classes.}

Hereditary pretorsion classes are an important tool in the study of the injective and projective profile of a ring $R$, see \cite{lopez12}. For this reason, it seems reasonable to see if torsion-theoretic notions or techniques may help in the study of $\mathfrak{spP}(R)$. Our next result tells us that it is torsion-free classes thay play a role in the study of this semilattice.

Proposition \ref{hom=0} tells us that, for every module $M$, the torsion-free class generated by $M$, $\mathcal{F}_M$ is contained in $\subpr{M}$. In Example \ref{rationals} we found that, in the category of $\mathbb{Z}$-modules one actually has that $\mathcal{F}_{\mathbb{Q}} = \subpr{\mathbb{Q}}$. Our next goal is to characterize those subprojective portfolios for which this phenomenon happens. First, we give a definition.

\begin{definition}\label{defbasic}
Let $\mathcal{C} \in \mathfrak{spP}(R)$. We say that $\mathcal{C}$ is a {\em basic} $sp$-portfolio if there exists $M \in \modR$ such that $\mathcal{C} = \subpr{M} = \{N \in \modR : \zhom{M}{N} = 0\}$.
\end{definition}

As a quick example, notice that $\modR$ is always a basic $sp$-portfolio, as $\modR = \subpr{0}$. Moreover, if $R$ is a cogenerator for $\modR$ (e.g. a QF-ring) then the only basic subportfolio is $\modR$.

It is clear that if $\mathcal{C}$ is a basic $sp$-portfolio and $M$ is a module as in Definition \ref{defbasic} then $\zhom{M}{P} = 0$ for every projective module $P$. The following proposition tells us that this condition is indeed sufficient for $\subpr{M}$ to be basic.

\begin{prop}\label{basic}
Let $\mathcal{C} \subseteq \modR$ be an $sp$-portfolio. The following conditions are equivalent.
\begin{enumerate}
\item $\mathcal{C}$ is basic.
\item There exists a module $M$ such that $\mathcal{C} = \subpr{M}$ and $\zhom{M}{R} = 0$.
\end{enumerate}
\end{prop}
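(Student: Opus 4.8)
The plan is to prove the equivalence by treating the two directions separately, with the forward direction being essentially definitional and the reverse direction requiring the real work. For $(1) \Rightarrow (2)$, I would observe that if $\mathcal{C}$ is basic, then by Definition \ref{defbasic} there is a module $M$ with $\mathcal{C} = \subpr{M} = \{N : \zhom{M}{N} = 0\}$. Since $R$ (as a right module over itself) is projective, and projective modules always lie in every subprojectivity domain by Proposition \ref{basicfact2}, we have $R \in \subpr{M} = \{N : \zhom{M}{N} = 0\}$, which forces $\zhom{M}{R} = 0$. This gives the desired module for condition $(2)$ immediately.

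The substance is in $(2) \Rightarrow (1)$. Here I would start from a module $M$ with $\mathcal{C} = \subpr{M}$ and $\zhom{M}{R} = 0$, and aim to show $\subpr{M} = \{N : \zhom{M}{N} = 0\}$. The containment $\supseteq$ is free: it is exactly Proposition \ref{hom=0}, which says $\zhom{M}{N} = 0$ implies $N \in \subpr{M}$. So the real goal is the reverse containment $\subpr{M} \subseteq \{N : \zhom{M}{N} = 0\}$, i.e. to show that $N \in \subpr{M}$ forces $\zhom{M}{N} = 0$. First I would note that the hypothesis $\zhom{M}{R} = 0$ propagates to all free modules: any homomorphism $M \to F$ with $F = R^{(\Lambda)}$ free would, upon composing with the coordinate projections $F \to R$, yield maps $M \to R$, all of which must vanish by hypothesis; hence $\zhom{M}{F} = 0$ for every free $F$ (and indeed for every projective, being a summand of a free module).

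With that in hand, I would take $N \in \subpr{M}$ and an arbitrary $f : M \to N$, and run the standard free-cover lifting argument in the style of Example \ref{rationals}. Choose an epimorphism $g : F \to N$ with $F$ free. Since $N \in \subpr{M}$, Lemma \ref{onlyfrees} guarantees a lift $h : M \to F$ with $gh = f$. But $\zhom{M}{F} = 0$ by the previous step, so $h = 0$, whence $f = gh = 0$. As $f$ was arbitrary, $\zhom{M}{N} = 0$, giving $N \in \{N : \zhom{M}{N} = 0\}$ and completing the reverse containment. Combining both containments yields $\subpr{M} = \{N : \zhom{M}{N} = 0\}$, so $\mathcal{C}$ is basic.

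The only step requiring any care is the propagation of $\zhom{M}{R} = 0$ to $\zhom{M}{F} = 0$ for free $F$; the subtlety is that $F$ may be an infinite direct sum, so one must argue via the coordinate projections $F \to R$ rather than a naive product decomposition. This is the main (and mild) obstacle, and it is handled cleanly by noting that a homomorphism into a direct sum, composed with each canonical projection, recovers enough information to force it to be zero when all such composites vanish. Everything else is a routine invocation of results already established earlier in the excerpt.
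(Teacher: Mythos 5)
Your proof is correct and is essentially the paper's own argument: both hinge on lifting a map $f:M\to N$ along a free cover $g:F\to N$ and using the fact that $\zhom{M}{R}=0$ forces $\zhom{M}{F}=0$, so the lift (hence $f$) must vanish. The only differences are cosmetic --- you argue the inclusion $\subpr{M}\subseteq\{N:\zhom{M}{N}=0\}$ directly while the paper phrases it contrapositively, and you spell out the propagation of $\zhom{M}{R}=0$ to free modules and the $(1)\Rightarrow(2)$ direction, which the paper leaves as ``clear.''
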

\begin{proof}
$(1) \Rightarrow (2)$ is clear. We show $(2) \Rightarrow (1)$. Let $N$ be a module such that $\zhom{M}{N} \not= 0$, and let $f: M \rightarrow N$ be a nonzero morphism. Let $p: F \rightarrow N$ be an epimorphism with $F$ free. By (2), $\zhom{M}{F} = 0$, so $f$ cannot be lifted to a morphism $M \rightarrow F$, so $N \not\in \subpr{M}$. Hence, $\subpr{M}$ is basic.
\end{proof}

Note that if $\mathcal{C} = \subpr{M}$ is basic, then it does not follow that $\zhom{M}{R} = 0$. For example, by Proposition, if $M \in \modR$ then $M$ and $M \oplus R$ have the same subprojectivity domain, while it is always the case that $\zhom{M\oplus R}{R} \not= 0$.

As a consequence of Proposition \ref{basic}, we can list the subprojectivity domain of some classes of modules.

\begin{enumerate}
\item $\subpr{\Z_{p^{n}}} = \{M \in \text{Mod-}\mathbb{Z} : M \; \text{does not have elements of order} \; p\}$.
\item $\subpr{\bigoplus_{p \; \text{prime}} \Z_p} = \{M \in \text{Mod-}\mathbb{Z} : \Soc(M) = 0\} = \{M \in \text{Mod-}\mathbb{Z} : t(M) = 0\}$. 
\item $\subpr{\Z_{p^{\infty}}} = \{M \in \text{Mod-}\mathbb{Z} : \Z_{p^{\infty}} \; \text{is not isomorphic to a submodule of} \; M\}$. 
\end{enumerate}

As we have said, for every module $M$, the class $\{N \in \modR : \zhom{M}{N} = 0\}$ is a torsion-free class, that is, it is closed under submodules, extensions and arbitrary direct products. Then, we have the following consequence of Proposition \ref{basic}.

\begin{cor}
Let $M$ be a module such that $\zhom{M}{R} = 0$. Then, $\subpr{M}$ is closed under arbitrary products, submodules, and extensions.
\end{cor}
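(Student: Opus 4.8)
The plan is to combine Proposition \ref{basic} with the standard characterization of torsion-free classes recalled in the preliminaries. The corollary asserts that when $\zhom{M}{R} = 0$, the subprojectivity domain $\subpr{M}$ enjoys three closure properties: under arbitrary products, submodules, and extensions. The key observation is that these are precisely the three closure properties that define a torsion-free class, so the entire statement reduces to showing that $\subpr{M}$ actually \emph{is} a torsion-free class under the hypothesis $\zhom{M}{R} = 0$.

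First I would invoke Proposition \ref{basic}: since $\zhom{M}{R} = 0$, condition (2) of that proposition is satisfied (taking $\mathcal{C} = \subpr{M}$), so $\subpr{M}$ is a basic $sp$-portfolio. By Definition \ref{defbasic}, this means there is equality
\[
\subpr{M} = \{N \in \modR : \zhom{M}{N} = 0\}.
\]
The next step is to recall, as stated explicitly in the paragraph preceding the corollary and in the torsion-theory review of the introduction, that for any fixed module $M$ the class $\mathcal{F}_M = \{N \in \modR : \zhom{M}{N} = 0\}$ is exactly the torsion-free class $\mathcal{F}^{M}$ of the torsion theory cogenerated by $M$. A class is a torsion-free class if and only if it is closed under submodules, extensions, and arbitrary direct products; since $\mathcal{F}_M$ is such a torsion-free class, it automatically possesses all three closure properties. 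Identifying $\subpr{M}$ with $\mathcal{F}_M$ via the displayed equality then transfers these three properties to $\subpr{M}$, which is the assertion of the corollary.

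If one prefers not to quote the abstract torsion-free characterization as a black box, the three closures can also be checked by hand directly from the description $\{N : \zhom{M}{N}=0\}$, and I would note this as an alternative. Closure under submodules follows because any nonzero map into a submodule $N' \le N$ is a nonzero map into $N$; closure under products follows because $\zhom{M}{\prod_\lambda N_\lambda} \cong \prod_\lambda \zhom{M}{N_\lambda}$, which vanishes exactly when each factor does; and closure under extensions follows from the left-exactness of $\zhom{M}{-}$ applied to a short exact sequence $0 \to N' \to N \to N'' \to 0$, where vanishing of the outer Hom groups forces vanishing of the middle one. Each of these is routine and I would not grind through the diagram chases.

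I do not expect any genuine obstacle here, since the corollary is essentially a packaging of Proposition \ref{basic} together with a standard fact. The only point requiring care is the logical order: the hypothesis $\zhom{M}{R} = 0$ is used solely to activate Proposition \ref{basic} and thereby obtain the equality $\subpr{M} = \{N : \zhom{M}{N} = 0\}$; without that equality the closure properties of the Hom-vanishing class would not be known to hold for $\subpr{M}$ itself. So the one step that must be gotten right is the appeal to Proposition \ref{basic} to reduce $\subpr{M}$ to the basic form before invoking torsion-freeness.
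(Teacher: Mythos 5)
Your proposal is correct and matches the paper's intended argument exactly: the paper derives this corollary by combining Proposition \ref{basic} (which, under $\zhom{M}{R}=0$, identifies $\subpr{M}$ with $\{N \in \modR : \zhom{M}{N}=0\}$) with the remark immediately preceding the corollary that such Hom-vanishing classes are torsion-free classes, hence closed under submodules, extensions, and arbitrary products. Your alternative hands-on verification of the three closure properties is a fine supplement but not needed.
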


The module $\Z_{p^{\infty}}$ exhibits an interesting behaviour. It is not projective, but its subprojectivity domain is in some sense large. We formalize this in the following definition.

\begin{definition}
Let $M$ be an $R$-module. We say that $M$ is {\em maximally subprojective if} $\subpr{M}$ is a coatom in $\mathfrak{spP}(R)$.
\end{definition}

\begin{prop}
Let $M$ be a module such that $\subpr{M} = \{K \in \modR : K \; \text{does not have a direct summand isomorphic to} \; M\}$. Then, $M$ is maximally subprojective.
\end{prop}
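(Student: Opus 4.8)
The plan is to verify the two defining features of a coatom in $\mathfrak{spP}(R)$ directly: that $\subpr{M}$ is a proper element (i.e.\ $\subpr{M} \neq \modR$), and that no $sp$-portfolio lies strictly between $\subpr{M}$ and the top element $\modR$. For properness, I would first observe that $M$ is always a direct summand of itself, so the hypothesis gives $M \notin \subpr{M}$; by Proposition \ref{basicfact1} this means $M$ is not projective, and hence $\subpr{M} \neq \modR$.

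The heart of the argument is to take an arbitrary $\mathcal{D} \in \mathfrak{spP}(R)$ with $\subpr{M} \subsetneq \mathcal{D}$ and show $\mathcal{D} = \modR$. Since $\mathcal{D}$ is an $sp$-portfolio, I may write $\mathcal{D} = \subpr{L}$ for some module $L$, and the plan is to run a dichotomy on whether $M$ is isomorphic to a direct summand of $L$. If $L$ has a direct summand isomorphic to $M$, say $L \cong M \oplus L'$, then Proposition \ref{subprojectivity domains of sums} yields $\subpr{L} = \subpr{M} \cap \subpr{L'} \subseteq \subpr{M}$, which contradicts $\subpr{M} \subsetneq \subpr{L}$. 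Therefore $M$ is not a direct summand of $L$, and by the very description of $\subpr{M}$ in the hypothesis this places $L$ itself in $\subpr{M} \subseteq \subpr{L}$. Thus $L \in \subpr{L}$, and Proposition \ref{basicfact1} forces $L$ to be projective, so $\mathcal{D} = \subpr{L} = \modR$, as required.

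The main obstacle, such as it is, lies in recognizing that the comparison between the competing portfolio and $\subpr{M}$ should be transported onto the generator $L$, and that the direct-sum formula of Proposition \ref{subprojectivity domains of sums} is exactly the tool that converts the statement ``$M$ is a direct summand of $L$'' into the containment $\subpr{L} \subseteq \subpr{M}$ running opposite to the assumed strict containment. Once this case split is set up correctly, each branch closes immediately, so beyond arranging the dichotomy I do not anticipate any serious technical difficulty.
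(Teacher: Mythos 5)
Your proposal is correct and follows essentially the same route as the paper's own proof: assume a strictly larger $sp$-portfolio $\subpr{L}$, rule out $M$ being a direct summand of $L$ via Proposition \ref{subprojectivity domains of sums}, then use the hypothesis to place $L$ in $\subpr{M} \subseteq \subpr{L}$ and invoke Proposition \ref{basicfact1} to conclude $L$ is projective. The only (harmless) addition is your explicit check that $\subpr{M} \neq \modR$, which the paper leaves implicit.
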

\begin{proof}
Assume $N$ is a module with $\subpr{M} \subsetneq \subpr{N}$. If $N \cong M \oplus K$, then $\subpr{M} \subsetneq \subpr{N} = \subpr{M}\cap\subpr{K} \subseteq \subpr{M}$, a contradiction. Hence, $N$ does not have direct summands isomorphic to $M$. By our assumptions, $N \in \subpr{M} \subsetneq \subpr{N}$, so $N$ is $N$-subprojective, that is, $N$ is projective. Hence, $M$ is maximally subprojective.
\end{proof}

\begin{cor}
\begin{enumerate} \item $\Z_{p^{\infty}}$ is a maximally subprojective $\Z$-module. \item For any ring $R$, if $S$ is a simple injective nonprojective module over any ring $R$, then $S$ is maximally subprojective. \end{enumerate}
\end{cor}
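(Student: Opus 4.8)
The plan is to verify the two claims by reducing each to the previous proposition, which says that a module $M$ satisfying $\subpr{M} = \{K : K \text{ has no direct summand} \cong M\}$ is automatically maximally subprojective. So for each of the two modules in question, the task becomes identifying its subprojectivity domain with exactly this ``no direct summand isomorphic to $M$'' class.

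For part (1), the module is $\Z_{p^{\infty}}$. I would first invoke the computation already recorded in the list following Proposition \ref{basic}, namely that $\subpr{\Z_{p^{\infty}}}$ equals the class of $\Z$-modules $M$ for which $\Z_{p^{\infty}}$ is not isomorphic to a submodule of $M$. The key step is then to upgrade ``not a submodule'' to ``not a direct summand.'' One direction is trivial, since a direct summand is in particular a submodule. For the converse I would use the structure theory of divisible abelian groups: $\Z_{p^{\infty}}$ is divisible, hence injective as a $\Z$-module, so whenever it embeds as a submodule of an abelian group it must split off as a direct summand. Thus ``$\Z_{p^{\infty}}$ is a submodule of $M$'' and ``$\Z_{p^{\infty}}$ is a direct summand of $M$'' coincide, giving exactly the hypothesis of the preceding proposition and allowing me to conclude.

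For part (2), let $S$ be a simple injective nonprojective right $R$-module. Here I would argue directly that $\subpr{S} = \{K : K \text{ has no direct summand} \cong S\}$. For the inclusion $\subseteq$, if $K$ had a direct summand isomorphic to $S$ then by Proposition \ref{sums in subprojectivity domain} we would get $S \in \subpr{S}$, forcing $S$ projective by Proposition \ref{basicfact1}, a contradiction; so any $K \in \subpr{S}$ has no such summand. For the reverse inclusion, suppose $K$ has no direct summand isomorphic to $S$ and take any $f: S \to K$. Since $S$ is simple, $f$ is either zero or a monomorphism; if $f = 0$ the lifting is trivial, and if $f$ is a monomorphism then, $S$ being injective, its image splits off $K$, contradicting our assumption. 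Hence every map $S \to K$ is zero, so $K \in \subpr{S}$ by Proposition \ref{hom=0}, completing the identification and letting me apply the preceding proposition.

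The routine part here is the bookkeeping with the splitting arguments; **the only genuine subtlety is recognizing that injectivity is exactly what converts ``submodule'' into ``direct summand''** in both parts, which is precisely why both $\Z_{p^{\infty}}$ (divisible, hence $\Z$-injective) and an injective simple module fit the template of the previous proposition. I expect no serious obstacle beyond making sure, in part (2), that a nonzero map out of the simple module $S$ is forced to be injective (its kernel is a proper submodule of the simple module $S$, hence zero), which is immediate.
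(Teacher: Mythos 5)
Your proposal is correct, and it follows the paper's blueprint: both reduce to the proposition that a module $M$ with $\subpr{M} = \{K : K \text{ has no direct summand isomorphic to } M\}$ is maximally subprojective. For part (1) you are doing exactly what the paper leaves implicit (its proof just says ``clear from the examples after Proposition \ref{basic}''): the listed computation gives $\subpr{\Z_{p^{\infty}}}$ as the class of groups not containing $\Z_{p^{\infty}}$ as a \emph{submodule}, and your observation that divisibility (injectivity) upgrades ``submodule'' to ``direct summand'' is precisely the missing bookkeeping. For part (2) your route differs mildly from the paper's. The paper first proves $\zhom{S}{R}=0$ (a nonzero map $S \to R$ would be monic by simplicity and split by injectivity, making $S$ projective), then invokes Proposition \ref{basic} to conclude $\subpr{S}$ is \emph{basic}, i.e.\ equals $\{N : \zhom{S}{N}=0\}$, and finally identifies that class with the no-summand class. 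You bypass the basic-portfolio machinery entirely: you get $\subpr{S} \subseteq \{K : K \text{ has no summand} \cong S\}$ from the summand-closure of subprojectivity domains (Proposition \ref{sums in subprojectivity domain}) together with Proposition \ref{basicfact1}, and the reverse inclusion from Proposition \ref{hom=0} after the same simplicity-plus-injectivity argument kills all maps $S \to K$. The underlying facts are identical, but your version is more self-contained (it needs only the elementary propositions of Section 2), whereas the paper's version has the advantage of exhibiting $\subpr{S}$ as a basic portfolio, tying the example into the torsion-theoretic framework of Section 3.
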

\begin{proof}
1) is clear from the examples after Proposition \ref{basic}. For 2), if $S$ is a simple injective nonprojective module, then $\zhom{S}{R} = 0$, for otherwise $S$ would be a summand of a projective module. Then, $\subpr{S}$ is basic. Finally, note that those modules $M$ for which $\zhom{S}{M} = 0$ are precisely the modules that do not contain a direct summand isomorphic to $S$. Then, $S$ is maximally subprojective.
\end{proof}

\begin{cor}
Let $R$ be a non-semisimple right $V$-ring. Then, $R$ has maximally subprojective modules.
\end{cor}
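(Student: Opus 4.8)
The final statement to prove is:

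\begin{prop}
Let $R$ be a non-semisimple right $V$-ring. Then, $R$ has maximally subprojective modules.
\end{prop}

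The plan is to exploit the previous corollary, which guarantees that any simple injective nonprojective module is maximally subprojective. Since $R$ is a right $V$-ring, \emph{every} simple right $R$-module is injective by definition, so the only thing I need to produce is a single simple module that fails to be projective. Thus the whole proposition reduces to the claim that a non-semisimple right $V$-ring admits a simple nonprojective right module, and then I can invoke part (2) of the preceding corollary to conclude that this simple module is maximally subprojective.

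First I would argue by contraposition: suppose, toward a contradiction, that every simple right $R$-module is projective. Combined with the $V$-ring hypothesis, this would make every simple module both projective and injective. The key step is then to show that this forces $R$ to be semisimple, contradicting the hypothesis. To carry this out I would consider the module $R/J(R)$, or more directly analyze a maximal right ideal $I \leq R$: the simple quotient $R/I$ being projective means the canonical epimorphism $R \to R/I$ splits, so every maximal right ideal is a direct summand of $R$. One expects this splitting condition, holding for all maximal right ideals, to yield that $R$ is a semisimple ring; the cleanest route is to note that over a $V$-ring the Jacobson radical $J(R)$ is zero, so $\Soc(R_R)$ is essential considerations and projectivity of simples lets one decompose $R$ as a finite direct sum of simple modules.

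The main obstacle I anticipate is the passage from ``every simple module is projective (and injective)'' to ``$R$ is semisimple,'' which requires a little ring-theoretic care rather than being purely formal. The subtlety is that one must handle arbitrary, possibly infinitely generated, families of maximal right ideals and ensure the splittings assemble into a semisimple decomposition of $R_R$; here I would lean on standard facts — that a $V$-ring has $J(R)=0$, and that if the simple module $R/I$ is projective for every maximal right ideal $I$ then $R_R$ has essential, in fact equal to itself, socle — to conclude that $R_R$ is a finite direct sum of simple modules and hence $R$ is semisimple artinian. Once the contradiction is reached, there must exist a maximal right ideal $I$ with $R/I$ nonprojective; since $R$ is a $V$-ring, $R/I$ is simple and injective, so by part (2) of the previous corollary $R/I$ is maximally subprojective, completing the argument.
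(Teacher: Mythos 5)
Your proposal is correct and takes essentially the same route as the paper, which states this corollary without proof precisely because it follows immediately from part (2) of the preceding corollary: over a right $V$-ring every simple module is injective, and non-semisimplicity forces some simple module to be nonprojective, which is then maximally subprojective. The one step you flag as needing care --- that if every simple right $R$-module were projective then $R$ would be semisimple --- is a standard fact whose cleanest form is the socle argument: every maximal right ideal $\mathfrak{m}$ splits off a simple summand $S$ with $R = \mathfrak{m} \oplus S$, so if $\Soc(R_R)$ were proper it would lie in some maximal $\mathfrak{m}$, contradicting $S \subseteq \Soc(R_R)$ and $S \cap \mathfrak{m} = 0$; hence $\Soc(R_R) = R$ and $R$ is semisimple (note that your alternative appeal to $J(R)=0$ giving an essential socle is not valid in general, e.g.\ for $\mathbb{Z}$, but the splitting argument does not need it).
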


Our next result tells us that every $sp$-portfolio is basic if and only if every $sp$-portfolio is a torsion free class and describes precisely for which rings these conditions hold. To state it, we need the following known result.

\begin{prop}[see e.g. \cite{teply}]\label{refsug}
Let $R$ be a ring. The following conditions are equivalent.
\begin{enumerate}
\item $R$ is a right hereditary, right perfect, left coherent ring.
\item $R$ is a semiprimary, right hereditary, left coherent ring.
\end{enumerate}
\end{prop}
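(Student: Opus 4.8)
The plan is to exploit the fact that conditions (1) and (2) both already assert that $R$ is right hereditary and left coherent; these two hypotheses are common to the two statements and may therefore be held fixed throughout. Consequently the proposition reduces to a single claim: for a right hereditary, left coherent ring $R$, being right perfect is equivalent to being semiprimary. The convenient feature of this reformulation is that the gap between the two notions collapses to one question about the Jacobson radical $J=J(R)$, because by Bass's theorem (see \cite{anderson92}) right perfectness is equivalent to $R/J$ being semisimple artinian together with $J$ being left $T$-nilpotent, whereas semiprimary means $R/J$ semisimple artinian together with $J$ nilpotent.

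The implication from semiprimary to right perfect is the routine half, and I would dispose of it first. If $R$ is semiprimary then $J^k=0$ for some $k$, so any product of $k$ elements of $J$ vanishes and $J$ is in particular left $T$-nilpotent; since $R/J$ is semisimple artinian by hypothesis, Bass's criterion immediately yields that $R$ is right perfect, and the shared hypotheses of right heredity and left coherence complete condition (1). No homological input beyond Bass's theorem is needed here.

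The substantive direction is to show that a right hereditary, left coherent, right perfect ring is semiprimary, and by the discussion above the only missing ingredient is the nilpotency of $J$. Bass's theorem already delivers that $R/J$ is semisimple artinian and that $J$ is left $T$-nilpotent; applying $T$-nilpotency to constant sequences shows that every element of $J$ is nilpotent, so $J$ is at least a nil ideal. The heart of the proof is then the passage from \emph{nil} to \emph{nilpotent}, and this is exactly where the two homological hypotheses must be spent. Right heredity makes every right ideal --- in particular every power $J^n$ --- a projective right module, so the descending chain $J \supseteq J^2 \supseteq \cdots$ consists of projective modules with semisimple successive quotients $J^n/J^{n+1}$; left coherence, together with right perfectness via Chase's theorem \cite{chase60} already invoked in Proposition \ref{closure under products}, is what supplies the finiteness of presentations that keeps this lower Loewy series from being genuinely transfinite. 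I would aim to extract from this a bona fide chain condition --- most naturally right Noetherianness, or equivalently a finite Loewy length --- after which a nil ideal is forced to be nilpotent by Levitzki's theorem (\cite{lam99}), giving $J$ nilpotent and hence $R$ semiprimary (indeed right Artinian, by Hopkins--Levitzki). The main obstacle is precisely this synthesis: heredity alone controls only submodules of projectives and perfectness alone yields only $T$-nilpotency, so neither hypothesis in isolation terminates the radical series, and the delicate point is to combine them with coherence so as to collapse the transfinite Loewy series to a finite one. This is the technical core that is carried out in \cite{teply}.
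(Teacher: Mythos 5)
The paper itself offers no proof of Proposition \ref{refsug}: it is stated with the citation ``see e.g.\ \cite{teply}'', whose content is exactly the nontrivial implication (right hereditary, right perfect $\Rightarrow$ semiprimary; left coherence rides along on both sides). Your reduction to ``perfect $\Leftrightarrow$ semiprimary over a fixed hereditary, coherent base'' is sensible, and your direction (2) $\Rightarrow$ (1) is correct and complete: nilpotent implies $T$-nilpotent, so Bass's characterization (\cite{anderson92}) gives right perfect. But for (1) $\Rightarrow$ (2) you have not given an argument --- you explicitly hand ``the technical core'' back to \cite{teply}. So, as a blind proof attempt, it is incomplete precisely where all the mathematical content lies; everything you do prove is the routine half.

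Moreover, the route you sketch for that core is not salvageable as stated. You propose to derive right Noetherianness (which you call ``equivalent'' to finite Loewy length) and then invoke Levitzki's theorem. First, the two are not equivalent in either direction ($\mathbb{Z}$ is Noetherian with infinite Loewy length). Second, and fatally, right Noetherianness simply does not follow from hypotheses (1): take $R = \begin{pmatrix} K & V \\ 0 & K \end{pmatrix}$ with $K$ a field and $V$ an infinite-dimensional $K$-vector space. Here $J = J(R)$ satisfies $J^2 = 0$ and $R/J \cong K \times K$, so $R$ is semiprimary, hence right perfect; $J$ is a direct sum of copies of the projective simple on each side, so $R$ is right and left hereditary (for semiprimary rings, hereditary on a side is equivalent to $J$ projective on that side); and left hereditary implies left coherent, since finitely generated left ideals are then finitely generated projective, hence finitely presented. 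Thus $R$ satisfies both (1) and (2), yet $J$ is not finitely generated as a right ideal, so $R$ is not right Noetherian, and any argument extracting Noetherianness from (1) must break down. The alternative target you name, finite Loewy length, is --- given that $R/J$ is semisimple artinian --- literally the assertion that $J$ is nilpotent, i.e.\ the conclusion itself, so that version of the plan is circular. (A further warning sign: left coherence plays no role in Teply's theorem, so a sketch in which coherence ``supplies the finiteness'' is misreading where the work happens.) The genuine content --- that $T$-nilpotent plus hereditary forces nilpotent --- still needs the argument of \cite{teply}, which neither you nor the paper reproduces.
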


We will use this result freely throughout the paper, most notably in Propositions \ref{everyspbasic}, \ref{4.6}, \ref{4.8} and \ref{4.19}. 

\begin{prop}\label{everyspbasic}
Let $R$ be a ring. The following are equivalent.
\begin{enumerate}
\item $R$ is a semiprimary, right hereditary, left coherent ring.
\item Every $sp$-porfolio is basic.
\item Every $sp$-portfolio is a torsion free class.
\end{enumerate}
\end{prop}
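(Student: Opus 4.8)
The plan is to prove the chain $(1)\Rightarrow(2)\Rightarrow(3)\Rightarrow(1)$, treating the equivalence $(2)\Leftrightarrow(3)$ as the conceptually easy half and reserving the ring-theoretic work for the implications that involve condition $(1)$. First, the implication $(3)\Rightarrow(1)$: if every $sp$-portfolio is a torsion-free class, then in particular $\subpr{M}$ is closed under arbitrary products, submodules and extensions for every $M$. By Proposition~\ref{closure under products}, closure under products for all modules forces $R$ to be right perfect and left coherent; by Proposition~\ref{Rrighthereditary}, closure under submodules for all modules forces $R$ to be right hereditary. Hence $R$ is right hereditary, right perfect and left coherent, which by Proposition~\ref{refsug} is equivalent to condition~$(1)$. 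This is the cleanest direction, since it just repackages the two earlier characterizations together with the stated equivalence from Teply.

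Next, the implication $(2)\Rightarrow(3)$ is essentially immediate: we have already observed that for any $M$ the class $\{N : \zhom{M}{N}=0\}$ is a torsion-free class, so if every $sp$-portfolio equals such a class (i.e.\ is basic), then every $sp$-portfolio is automatically a torsion-free class. Conversely, for $(3)\Rightarrow(2)$ I would argue that once $R$ is known (via $(3)\Rightarrow(1)$) to be right hereditary, right perfect and left coherent, any $sp$-portfolio $\subpr{M}$ is closed under submodules and products, and the goal is to promote the containment $\mathcal{F}_M \subseteq \subpr{M}$ from Proposition~\ref{hom=0} to an equality. The strategy here is to split off the projective-mapping part of $M$: over such a ring one expects a decomposition (or reduction) allowing one to replace $M$ by a module $M'$ with $\subpr{M}=\subpr{M'}$ and $\zhom{M'}{R}=0$, after which Proposition~\ref{basic} gives that $\subpr{M'}$ is basic.

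The main load-bearing implication, and the one I expect to be the hardest, is $(1)\Rightarrow(2)$: assuming $R$ is semiprimary, right hereditary and left coherent, show that \emph{every} $\subpr{M}$ is basic. The obstacle is producing, for an arbitrary $M$, a module whose subprojectivity domain is exactly $\{N : \zhom{M}{N}=0\}$; equivalently, showing that if $\zhom{M}{N}\neq 0$ then $N\notin\subpr{M}$. The natural approach mirrors Example~\ref{rationals}: given a nonzero $f:M\to N$, take a free cover $p:F\to N$ and try to obstruct lifting $f$. This works immediately when $\zhom{M}{R}=0$, so the crux is handling the part of $M$ that \emph{does} map nontrivially to projectives. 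Here I would exploit the semiprimary, right perfect, right hereditary hypotheses: right perfectness makes projective covers available and makes projective modules discrete, while right hereditarity guarantees that the relevant submodules of free modules stay projective, and the nilpotency of $J(R)$ (semiprimary) controls the radical so that a trace/radical decomposition $M \cong M_0 \oplus (\text{projective part})$ can be extracted with $\zhom{M_0}{R}=0$. The delicate point will be verifying that such a splitting interacts correctly with the lifting problem so that $\subpr{M}$ reduces to the basic portfolio $\subpr{M_0}$ via Proposition~\ref{basic}; assembling these structural facts into a clean argument, rather than any single estimate, is where the real work lies.
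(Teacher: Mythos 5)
Your overall architecture is sound, and the two easy implications are handled correctly: $(2)\Rightarrow(3)$ is immediate because classes of the form $\{N : \zhom{M}{N}=0\}$ are torsion-free classes, and your $(3)\Rightarrow(1)$ via Propositions \ref{closure under products} and \ref{Rrighthereditary} together with Proposition \ref{refsug} is precisely the alternative route the paper itself records in the remark following the proposition (the paper's own proof instead observes that torsion-free classes are closed under intersection and invokes Proposition \ref{basicfact2}; both arguments work).

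The genuine gap is in $(1)\Rightarrow(2)$, which you rightly identify as the load-bearing step but never actually prove: you say ``one expects a decomposition'' $M\cong M_0\oplus(\text{projective part})$ with $\zhom{M_0}{R}=0$, and you attribute its existence to projective covers, discreteness of projective modules, and nilpotency of $J(R)$. No construction is given, and the tools you name are not the ones that produce it. The paper's construction is: let $N=\bigcap\{\Ker(f) : f\colon M\to P,\ P\ \text{projective}\}$, the reject of the projectives in $M$. Then $M/N$ embeds in a direct product of projective modules, and---this is exactly where hypothesis $(1)$ enters---that product is projective because $R$ is right perfect and left coherent (Chase's theorem), and then $M/N$ itself is projective because $R$ is right hereditary. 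Hence $0\to N\to M\to M/N\to 0$ splits, so $M\cong M/N\oplus K$ with $K\cong N$, and $\subpr{M}=\subpr{M/N}\cap\subpr{K}=\subpr{K}$ by Proposition \ref{subprojectivity domains of sums}. Finally, $\zhom{K}{R}=0$: otherwise a nonzero map $K\to R$ has projective image (hereditarity again), which would yield a submodule properly contained in $N$ whose quotient of $M$ is projective, contradicting the fact that $N$ is the smallest submodule of $M$ with projective quotient. Proposition \ref{basic} then shows $\subpr{M}=\subpr{K}$ is basic. Without this reject-plus-splitting argument (or an equivalent), the key implication is asserted rather than proved; discreteness of projectives and nilpotency of the radical would not by themselves give the splitting---what is needed, and what $(1)$ actually buys, is closure of the class of projective modules under direct products and submodules.
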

\begin{proof}
(2) $\Rightarrow$ (3) is clear. (3) $\Rightarrow$ (1) follows because torsion free classes are closed under arbitrary intersections, so the class of projective modules is a torsion free class, cf. Proposition \ref{basicfact2}. Finally, for (1) $\Rightarrow$ (2), let $M$ be a right $R$-module. Since $R$ is right hereditary, right perfect, left coherent, the class of projective modules is closed under direct products and submodules. Let $N = \bigcap\{\Ker(f) : f: M \rightarrow P, \; \text{and} \; P \; \text{is projective}\}$. $M/N$ is projective and $N$ is the smallest submodule of $M$ that yields a projective quotient. Now, $M \cong M/N \oplus K$. If $\zhom{K}{R} \not= 0$ then $K$ has a projective quotient and we can find a submodule of $M$ properly contained in $N$ that yields a projective quotient of $M$, a contradiction. Hence, $\zhom{K}{R} = 0$, so $\subpr{K}$ is basic, and $\subpr{M} = \subpr{M/N}\cap\subpr{K} = \subpr{K}$.
\end{proof}

Note that if we ignore (2) in Proposition \ref{everyspbasic}, the equivalence (1) $\Leftrightarrow$ (3) can be easily obtained from propositions \ref{closure under products} and \ref{Rrighthereditary}. \\

The use of torsion-theoretic techniques can also be applied to study the notion of subinjectivity, as defined in \cite{pinar11}. 

\begin{definition}
Let $\mathcal{I} \subseteq \modR$. We say that $\mathcal{I}$ is a {\em subinjective-portfolio}, or {\em $si$-portfolio} for short, if there exists $M \in \modR$ such that $\mathcal{I} = \underline{\mathfrak{In}}^{-1}(M)$. The class $\mathfrak{siP}(R) := \{\mathcal{I} \subseteq \modR : I \; \text{is an si-portfolio}\}$ will be called the {\em subinjective profile}, or {\em $si$-profile} of $R$. By \cite[Proposition 2.4(1)]{pinar11}, $\mathfrak{siP}(R)$ is a semilattice with biggest element.
\end{definition}

Analog to Proposition \ref{hom=0}, we have the following result, that tells us that, for every module $M$, the torsion class cogenerated by $M$ $\mathcal{T}^{M}$ is contained in $\underline{\mathfrak{In}}^{-1}(M)$. 

\begin{prop}\label{hom=0injective}
Let $M, N \in \modR$. If $\zhom{N}{M} = 0$, then $N \in \underline{\mathfrak{In}}^{-1}(M)$.
\end{prop}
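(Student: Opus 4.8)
The plan is to unwind the definition of relative subinjectivity from \cite{pinar11} and observe that the hypothesis annihilates all the relevant test maps, so that the required extensions exist trivially. Recall that $M$ is $N$-subinjective precisely when, for every embedding $i : N \to K$ and every homomorphism $f : N \to M$, there is a homomorphism $g : K \to M$ with $gi = f$; and $\subinj{M}$ is the class of all modules $N$ for which $M$ is $N$-subinjective. The statement to prove is the exact dual of Proposition \ref{hom=0}, with the roles of source and target interchanged: in subinjectivity the test maps run \emph{into} $M$, so the $\Hom$-group that governs them is $\zhom{N}{M}$, whereas in subprojectivity (Proposition \ref{hom=0}) the test maps ran out of $M$ and the governing group was $\zhom{M}{N}$.

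First I would fix an arbitrary embedding $i : N \to K$ together with an arbitrary homomorphism $f : N \to M$. The single key step is to observe that the hypothesis $\zhom{N}{M} = 0$ forces $f = 0$, since $f$ is itself an element of $\zhom{N}{M}$. Consequently the zero map $g := 0 : K \to M$ satisfies $gi = 0 = f$, so $f$ extends along $i$. As $i$ and $f$ were arbitrary, $M$ is $N$-subinjective, i.e. $N \in \subinj{M}$, which is the desired conclusion.

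There is essentially no obstacle here beyond applying the definition in the correct direction: the only point requiring care is that subinjectivity tests homomorphisms $N \to M$ (not $M \to N$) and extends them along embeddings \emph{of} $N$, so that $\zhom{N}{M} = 0$ is exactly the condition trivializing every test map. Once that orientation is pinned down, the argument is the verbatim dual of the proof of Proposition \ref{hom=0}, where a zero lift $h : M \to C$ was used against a surjection; here the zero extension $g : K \to M$ is used against an injection.
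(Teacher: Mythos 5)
Your proof is correct and is exactly the argument the paper intends: the paper states Proposition \ref{hom=0injective} without proof, presenting it as the analog of Proposition \ref{hom=0}, and your observation that $\zhom{N}{M}=0$ forces every test map $f:N\to M$ to be zero, so the zero map $g=0:K\to M$ extends it along any embedding $i:N\to K$, is precisely that dual argument. You also correctly pin down the one point where care is needed, namely that subinjectivity tests maps $N\to M$ rather than $M\to N$.
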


Motivated by Proposition \ref{hom=0injective}, we have the following definition.

 \begin{definition}
We say that an $si$-portfolio $\mathcal{I}$ is {\em basic} if there exists a module $M$ for which $\mathcal{I} = \subinj{M} = \{N \in \modR : \zhom{N}{M} = 0\}$. 
\end{definition}

\begin{prop}
Let $\mathcal{I} \subseteq \modR$ be an $si$-portfolio. The following conditions are equivalent.
\begin{enumerate}
\item $\mathcal{I}$ is basic.
\item There exists $M \in \modR$ such that $\mathcal{I} = \subinj{M}$ and $\zhom{E}{M} = 0$ for every injective module $E$.
\end{enumerate}

If $R$ is right noetherian, this happens if and only if $\zhom{\mathcal{E}}{M} = 0$, where $\mathcal{E} = \bigoplus\{E: E \; \text{is an indecomposable injective}\}$.
\end{prop}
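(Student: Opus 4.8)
The plan is to prove the equivalence of (1) and (2) first, then handle the right noetherian refinement separately. For $(1) \Rightarrow (2)$, I observe that the containment $\subinj{M} \supseteq \{N : \zhom{N}{M} = 0\}$ always holds by Proposition \ref{hom=0injective}, so basicness asserts the reverse containment. Suppose $\mathcal{I}$ is basic via some module $M$. Every injective module $E$ is subinjective relative to everything (the dual of Proposition \ref{basicfact2} for the subinjectivity setting), so $E \in \subinj{M}$, which forces $E \in \{N : \zhom{N}{M} = 0\}$ by basicness; hence $\zhom{E}{M} = 0$ for every injective $E$. For $(2) \Rightarrow (1)$, I would argue dually to Proposition \ref{basic}: suppose $N \notin \{N : \zhom{N}{M} = 0\}$, so there is a nonzero $f : N \to M$. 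Choose a monomorphism $\iota : N \to E$ with $E$ injective. If $N$ were in $\subinj{M}$, the map $f$ would extend along $\iota$ to some $\tilde f : E \to M$ with $\tilde f \iota = f$; but $\zhom{E}{M} = 0$ forces $\tilde f = 0$, whence $f = \tilde f \iota = 0$, a contradiction. Therefore $N \notin \subinj{M}$, giving $\subinj{M} = \{N : \zhom{N}{M} = 0\}$, i.e. $\mathcal{I}$ is basic.

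For the noetherian statement, the goal is to replace the condition ``$\zhom{E}{M} = 0$ for every injective $E$'' by the single condition $\zhom{\mathcal{E}}{M} = 0$, where $\mathcal{E}$ is the direct sum of (representatives of) the indecomposable injectives. The direction $\zhom{E}{M} = 0 \text{ for all injective } E \Rightarrow \zhom{\mathcal{E}}{M} = 0$ is immediate, since each indecomposable injective is in particular injective and $\mathcal{E}$ is itself a direct sum of injectives. The substantive direction uses the Matlis--Papp theorem: over a right noetherian ring, every injective module decomposes as a direct sum of indecomposable injectives, and arbitrary direct sums of injectives remain injective. Thus any injective $E$ is a direct summand of a (possibly large) direct sum of copies of the indecomposable summands appearing in $\mathcal{E}$, and $\zhom{\mathcal{E}}{M} = 0$ propagates to $\zhom{E}{M} = 0$ via the fact that $\zhom{-}{M}$ sends direct sums to products and vanishes on each summand.

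The main obstacle I anticipate is the bookkeeping in the noetherian direction, specifically justifying that $\zhom{\mathcal{E}}{M} = 0$ implies $\zhom{E'}{M} = 0$ for an \emph{arbitrary} direct sum $E'$ of indecomposable injectives, not merely for the single module $\mathcal{E}$ whose summands range over one representative of each isomorphism type. The point is that $\mathcal{E}$ already contains at least one copy of each isomorphism class of indecomposable injective, and $\zhom{-}{M}$ converts $\bigoplus$ into $\prod$; so the vanishing of $\zhom{E_\alpha}{M}$ for a single copy $E_\alpha$ of each indecomposable type—which follows from $\zhom{\mathcal{E}}{M} = \prod_\alpha \zhom{E_\alpha}{M} = 0$—yields the vanishing on every copy, and hence on any direct sum built from them. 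Once this is set up, applying Matlis--Papp to write an arbitrary injective $E$ as such a direct sum completes the argument. I would be careful to invoke right noetherianness precisely where it is needed: both for the decomposition of every injective into indecomposables and for the closure of infinite direct sums of injectives under injectivity.
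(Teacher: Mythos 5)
Your proof is correct and takes essentially the same route as the paper's: $(1) \Rightarrow (2)$ from the fact that injective modules lie in every subinjectivity domain, and $(2) \Rightarrow (1)$ by noting that a nonzero $f : N \rightarrow M$ cannot extend to an injective extension of $N$ once $\zhom{E}{M} = 0$ for injective $E$ (the paper uses $E(N)$ where you use an arbitrary injective embedding, an immaterial difference). Your Matlis--Papp argument for the noetherian refinement simply fills in what the paper dismisses as ``clear''; note only the decomposition of injectives into indecomposables is needed there, not the closure of direct sums of injectives under injectivity.
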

\begin{proof}
(1) $\Rightarrow$ (2) is clear from \cite[Proposition 2.3]{pinar11}. Now assume (2), and let $N \in \modR$ be such that $\zhom{N}{M}       \not= 0$. Then, there exists a nonzero $f: N \rightarrow M$ that cannot be extended to a morphism $\overline{f}: E(N) \rightarrow M$. Hence, $N \not\in \subinj{M}$. The last assertion is clear.
\end{proof}

It is again worth noticing that if $\mathcal{I} = \subinj{M}$ is basic, it does not follow that $\zhom{E}{M} = 0$ for every injective module $E$. For example, if $E_0$ is any injective module, it follows from \cite[Proposition 2.4 (2)]{pinar11} that $M$ and $M \oplus E_0$ have the same subinjectivity domain, while it is always the case that $\zhom{E_0}{E_0 \oplus M} \not= 0$.

\begin{example}\label{subinjofZ}
The subinjectivity domain of $\mathbb{Z}$ is $\subinj{\Z} = \{N \in \Z\text{-}\Mod : \zhom{N}{\Z} = 0\} = \{N \in \Z\text{-}\Mod : \Z \; \text{is not isomorphic to a direct summand of} \; N\}$.
\end{example}

Clearly, $\Z$ is not an injective (= divisible) $\Z$-module. However, it is proved in \cite{lopez12} that $\mathfrak{In}^{-1}(\Z)$ is a coatom in the injective profile of $\Z$. Interestingly enough, $\subinj{\Z}$ is also a coatom in the subinjective profile of $\Z$.

\begin{definition}
Let $M \in \modR$. We say that $M$ is {\em maximally subinjective} if $\subinj{M}$ is a coatom in the subinjective profile of $R$.
\end{definition}

\begin{prop}
Let $M$ be an $R$-module such that $\subinj{M} = \{N \in \modR : M \; \text{is not isomorphic to a direct summand of} \; N\}$. Then, $M$ is maximally subinjective.
\end{prop}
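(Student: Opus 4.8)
The plan is to mirror the proof of the analogous statement for maximally subprojective modules, dualizing \emph{projective} to \emph{injective} and the behaviour of \emph{direct summands of a sum} to the corresponding fact for subinjectivity domains. Three ingredients from the subinjective theory of \cite{pinar11} will be needed: first, the dual of Proposition \ref{basicfact1}, namely that $M$ is injective if and only if $M \in \subinj{M}$; second, the fact (contained in the semilattice structure of \cite[Proposition 2.4]{pinar11}) that $\subinj{M \oplus K} = \subinj{M} \cap \subinj{K}$, which plays here the role that Proposition \ref{subprojectivity domains of sums} played in the subprojective case; and third, that the top element of $\mathfrak{siP}(R)$ is $\modR$, realized as the subinjectivity domain of any injective module.

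First I would record that the hypothesis already forces $M$ to be non-injective: taking $N = M$ in the defining equality for $\subinj{M}$ shows $M \notin \subinj{M}$, since $M$ is trivially isomorphic to a direct summand of itself. By the dual of Proposition \ref{basicfact1} this means $M$ is not injective, so $\subinj{M} \subsetneq \modR$. Thus $\subinj{M}$ sits strictly below the top of $\mathfrak{siP}(R)$, and it remains only to rule out anything strictly between it and $\modR$.

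Next, to establish that $\subinj{M}$ is a coatom, I would take an arbitrary $N$ with $\subinj{M} \subsetneq \subinj{N}$ and argue that $\subinj{N} = \modR$. If $M$ were isomorphic to a direct summand of $N$, say $N \cong M \oplus K$, then the second ingredient gives $\subinj{N} = \subinj{M} \cap \subinj{K} \subseteq \subinj{M}$, contradicting the strict inclusion $\subinj{M} \subsetneq \subinj{N}$. Hence $M$ is not isomorphic to a direct summand of $N$, so by the defining equality for $\subinj{M}$ we get $N \in \subinj{M} \subsetneq \subinj{N}$. Therefore $N \in \subinj{N}$, and by the dual of Proposition \ref{basicfact1} the module $N$ is injective, whence $\subinj{N} = \modR$, as required.

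The main obstacle is not the combinatorial step, which is a faithful dualization of the maximally-subprojective argument, but rather confirming that the two structural facts transfer correctly to the subinjective side. In particular, the identity $\subinj{M \oplus K} = \subinj{M} \cap \subinj{K}$ must be extracted cleanly from \cite[Proposition 2.4]{pinar11}: a homomorphism $N \to M \oplus K$ decomposes into its two coordinates, and it extends along $N \hookrightarrow E(N)$ precisely when both coordinates do, so that $M \oplus K$ is $N$-subinjective if and only if both $M$ and $K$ are. Once this and the characterization of injectivity via $M \in \subinj{M}$ are in hand, the conclusion that $\subinj{M}$ is a coatom — and hence that $M$ is maximally subinjective — follows exactly as in the subprojective case.
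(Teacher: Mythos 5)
Your proof is correct and follows essentially the same route as the paper's: assume $\subinj{M} \subsetneq \subinj{N}$, rule out $M$ being a direct summand of $N$ via $\subinj{M \oplus K} = \subinj{M} \cap \subinj{K}$, conclude $N \in \subinj{M} \subsetneq \subinj{N}$, hence $N$ is $N$-subinjective and therefore injective. Your additional opening step, checking that the hypothesis forces $M$ to be non-injective so that $\subinj{M}$ is strictly below $\modR$, is a point the paper leaves implicit but which is genuinely needed for $\subinj{M}$ to qualify as a coatom of $\mathfrak{siP}(R)$.
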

\begin{proof}
Let $K$ be a module such that $\subinj{M} \subsetneq \subinj{K}$. Note that, if $K \cong L \oplus M$, then $\subinj{K} = \subinj{L}\cap\subinj{M} \subseteq \subinj{M}$, a contradiction. Hence, $M$ is not isomorphic to a direct summand of $K$, so $K \in \subinj{M} \subsetneq \subinj{K}$, that is $K$ is $K$-subinjective. Therefore, $K$ is injective and $M$ is maximally subinjective.
\end{proof}

Again, note that $\modR = \subinj{0}$ is always a basic $si$-portfolio. If $\modR$ has an injective generator (e.g. a right self-injective ring) then $\modR$ is the only basic $si$-portfolio. The next proposition tackles the extreme opposite case, that is, when every $si$-portfolio is basic. 
 
\begin{prop}
Let $R$ be a ring. The following conditions are equivalent.
\begin{enumerate}
\item $R$ is a right hereditary, right noetherian ring.
\item Every $si$-portfolio is basic.
\item Every $si$-portfolio is a torsion class.
\end{enumerate}
\end{prop}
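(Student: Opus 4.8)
The plan is to establish the cycle $(1) \Rightarrow (2) \Rightarrow (3) \Rightarrow (1)$, mirroring the structure of the subprojective analogue in Proposition \ref{everyspbasic} but with all arrows dualized. The implication $(2) \Rightarrow (3)$ should be the easy one: a basic $si$-portfolio is by definition of the form $\{N : \zhom{N}{M} = 0\} = \mathcal{T}^M$, the torsion class cogenerated by $M$, so if every $si$-portfolio is basic then every $si$-portfolio is automatically a torsion class. For $(3) \Rightarrow (1)$ I would argue dually to the subprojective case: torsion classes are closed under arbitrary direct sums, quotients and extensions, and in particular closed under arbitrary unions of chains; since by the dual of Proposition \ref{basicfact2} the intersection of all $si$-portfolios is exactly the class of injective modules, requiring every $si$-portfolio to be a torsion class forces the class of injective modules to be a torsion class. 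A torsion class is closed under quotients and arbitrary direct sums, so injectivity being closed under quotients gives right hereditary (quotients of injectives are injective is precisely the definition recalled in the preliminaries), while injectivity being closed under arbitrary direct sums gives right noetherian by the Bass--Papp theorem.

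For the main implication $(1) \Rightarrow (2)$ I would dualize the argument of Proposition \ref{everyspbasic} directly. Fix a right $R$-module $M$. Over a right hereditary, right noetherian ring the class of injective modules is closed under quotients and under arbitrary direct sums. The dual construction to the one used before is to let $T$ be the \emph{largest} injective submodule-image situation: concretely, I would form the trace or the sum of all images of injective modules mapping into $M$, i.e. set $T = \sum\{\Ima(f) : f : E \rightarrow M,\ E \text{ injective}\}$. Because injectivity is closed under quotients and direct sums in this setting, $T$ is itself injective, hence a direct summand of $M$, so $M \cong T \oplus K$ where $K$ contains no nonzero image of an injective module, that is $\zhom{E}{K} = 0$ for every injective $E$. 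By the characterization of basic $si$-portfolios (the previous proposition) $\subinj{K}$ is basic, and since $M$ and $M \oplus T$-type summands share subinjectivity domains via \cite[Proposition 2.4(2)]{pinar11}, one obtains $\subinj{M} = \subinj{T} \cap \subinj{K} = \subinj{K}$, using that $T$ injective gives $\subinj{T} = \modR$. Thus $\subinj{M}$ is basic.

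The step I expect to be the genuine obstacle is verifying that the summand $T$ is injective and splits off in the $(1) \Rightarrow (2)$ direction. The subprojective proof got projectivity of the analogous quotient $M/N$ essentially for free because $M/N$ was constructed as the image of $M$ in a product of projectives, and closure of projectives under products and submodules did the work; here the dual requires closure of injectives under \emph{direct sums} (noetherian) \emph{and} quotients (hereditary) simultaneously, and one must check that the trace $T$ is not merely a sum of injective images but is actually injective as a module. This is where the full strength of both ring-theoretic hypotheses is consumed, and where I would spend the most care, likely invoking that over a right noetherian ring an arbitrary direct sum of injectives is injective to realize $T$ as a quotient of an injective and hence injective by heredity. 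The remaining bookkeeping — that $K$ has no injective images and that the subinjectivity domains match — is then routine given the earlier propositions.
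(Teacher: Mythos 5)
Your proof is correct and takes essentially the same approach as the paper: your trace $T$ of the injectives in $M$ is exactly the paper's divisible part $d(M)$ in its decomposition $M = d(M) \oplus r(M)$ (the paper likewise notes $\zhom{E}{r(M)} = 0$ for every injective $E$ and then applies the basic $si$-portfolio characterization and the direct-sum property of subinjectivity domains), with the same use of noetherian for direct sums of injectives and hereditary for quotients. The implications $(2) \Rightarrow (3) \Rightarrow (1)$, which the paper dismisses as clear, are filled in by you exactly as intended.
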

\begin{proof}
(2) $\Rightarrow$ (3) $\Rightarrow$ (1) is clear. For (1) $\Rightarrow$ (2), assume $R$ is right hereditary, right noetherian and let $M \in \modR$. Then, $M = d(M) \oplus r(M)$, where $d(M)$ is the divisible part of $M$ and $r(M)$ its reduced part. Since $r(M)$ does not have injective submodules, $\zhom{E}{r(M)} = 0$ for every injective module $E$. Then, $\subinj{M}$ is basic and $\subinj{M} = \subinj{d(M)}\cap\subinj{r(M)} = \subinj{r(M)$}.
\end{proof}

Now, assume $R$ is a right hereditary, right noetherian ring. Then, the class $\mathfrak{E}$ of injective modules is a torsion class. Let $\mathfrak{F}$ denote its corresponding torsion free class. Then, $M \in \mathfrak{F}$ if and only if $\zhom{E}{M} = 0$ for every $E \in \mathfrak{E}$, that is, if and only if $\subinj{M} = \{N \in \modR : \zhom{N}{M} = 0\}$. Then, a module $K \in \mathfrak{F}$ is indigent if and only if $\{N \in \modR : \zhom{N}{K} = 0\} = \mathfrak{E}$, if and only if $K$ is a cogenerator of the torsion theory $(\mathfrak{E}, \mathfrak{F})$. With these observations, we have proved the following result.

\begin{prop}\label{hereditarynoetherianring}
Let $R$ be a right hereditary right noetherian ring, and let $\mathbb{T} = (\mathfrak{E}, \mathfrak{F})$ be the torsion theory where $\mathfrak{E}$ is the class of injective modules. Then, a module $M$ is $si$-poor if and only if $M/d(M)$ is a cogenerator of $\mathbb{T}$. In particular, $R$ has an $si$-poor module if and only if $\mathbb{T}$ can be cogenerated by a single module.
\end{prop}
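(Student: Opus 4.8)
The plan is to reduce the statement to the characterization already obtained for modules lying in the torsion-free class $\mathfrak{F}$, and then to absorb the injective (divisible) part of $M$, which should contribute nothing to the subinjectivity domain. I would lean heavily on the material just before the statement: over a right hereditary right noetherian ring every module splits as $M = d(M)\oplus r(M)$ with $d(M)$ injective and $r(M) = M/d(M)\in\mathfrak{F}$; the preceding unnumbered proposition already gives $\subinj{M} = \subinj{d(M)}\cap\subinj{r(M)} = \subinj{r(M)}$; and the discussion following it shows that a module $K\in\mathfrak{F}$ is $si$-poor precisely when it is a cogenerator of $\mathbb{T}$. So most of the work is in place and what remains is assembly.

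First I would note that $d(M)$, being injective, satisfies $\subinj{d(M)} = \modR$, so the identity $\subinj{M} = \subinj{r(M)}$ expresses that the subinjectivity domain only sees the reduced part. Since $M$ is $si$-poor exactly when $\subinj{M}$ equals the class $\mathfrak{E}$ of injective modules (the least possible subinjectivity domain, the subinjective analogue of Proposition \ref{basicfact2}), this yields the chain $M$ is $si$-poor $\iff \subinj{M} = \mathfrak{E} \iff \subinj{M/d(M)} = \mathfrak{E} \iff M/d(M)$ is $si$-poor. Because $M/d(M)\in\mathfrak{F}$, the last condition is, by the observations recalled above, equivalent to $M/d(M)$ being a cogenerator of $\mathbb{T}$, which proves the main equivalence.

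For the ``in particular'' clause the forward direction is immediate: if $M$ is $si$-poor then $M/d(M)$ is a single module cogenerating $\mathbb{T}$. The hard part will be the converse, and the key observation I would isolate is that any cogenerator of $\mathbb{T}$ is automatically reduced. Indeed, if $\mathbb{T}^{C} = \mathbb{T}$ then $\mathcal{T}^{C} = \mathfrak{E}$, so every $X$ with $\zhom{X}{C} = 0$ lies in $\mathfrak{E}$; hence $C\in\mathfrak{F}$ and $d(C) = 0$ (concretely, a nonzero injective summand would give a nonzero inclusion $d(C)\hookrightarrow C$ with $d(C)\in\mathfrak{E}$, contradicting $\mathcal{T}^{C} = \mathfrak{E}$). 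Thus $C = C/d(C)$ is a cogenerator, and by the main equivalence $C$ itself is $si$-poor, so $R$ has an $si$-poor module. I expect this reducedness point to be the only genuine obstacle: without it one is tempted to argue directly that $C/d(C)$ stays a cogenerator, which is awkward since passing to $C/d(C)$ a priori enlarges the associated torsion class; recognizing that a cogenerator of $\mathbb{T}$ can carry no nonzero torsion (injective) part sidesteps the issue, and everything else is a direct assembly of facts established before the statement.
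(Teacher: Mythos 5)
Your proof is correct and takes essentially the same route as the paper: the splitting $M = d(M)\oplus r(M)$ over a right hereditary right noetherian ring, the identity $\subinj{M} = \subinj{r(M)}$, and the observation that a module in $\mathfrak{F}$ is $si$-poor exactly when it cogenerates $\mathbb{T}$. Your explicit verification that any single cogenerator of $\mathbb{T}$ is automatically reduced is a detail the paper leaves implicit, but it is correct and fits the same argument.
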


As an application of Proposition \ref{hereditarynoetherianring}, we show that $si$-poor modules exist over a hereditary finite dimensional algebra over an algebraically closed field $k = \overline{k}$. Recall that, over such an algebra $A$, in mod-$A$ we have the Auslander-Reiten translate $\tau$, where $\tau M$ is the $k$-dual of the transpose of $M$. The Auslander-Reiten translate has several interesting properties, a number of which can be found in \cite[Chapter IV]{assem}.

\begin{cor}
Let $A$ be a hereditary finite dimensional algebra over an algebraically closed field $k$, and let $\mathcal{E}$ be the direct sum of indecomposable injectives. Then, $\tau\mathcal{E}$ is an $si$-poor module, where $\tau$ denotes the Auslander-Reiten translate.
\end{cor}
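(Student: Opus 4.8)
By Proposition \ref{hereditarynoetherianring}, it suffices to show that $\tau\mathcal{E}/d(\tau\mathcal{E})$ is a cogenerator of the torsion theory $\mathbb{T} = (\mathfrak{E}, \mathfrak{F})$ whose torsion class $\mathfrak{E}$ consists of the injective $A$-modules. Since $A$ is a finite dimensional algebra, every module in $\modd A$ is finite dimensional, hence of finite length; in particular no nonzero finitely generated module is divisible, so $d(\tau\mathcal{E}) = 0$ and it is enough to prove that $\tau\mathcal{E}$ itself cogenerates $\mathbb{T}$. Concretely, I would reduce the claim to the statement that a module $K$ satisfies $\zhom{E}{K} = 0$ for every injective $E$ precisely when $\zhom{\tau\mathcal{E}}{K} = 0$, i.e. that $\mathfrak{F} = \{K : \zhom{\tau\mathcal{E}}{K} = 0\}$, which is exactly the assertion that $\tau\mathcal{E}$ cogenerates $(\mathfrak{E},\mathfrak{F})$.

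First I would pin down $\mathfrak{E}$ and $\mathfrak{F}$ explicitly. Over a hereditary finite dimensional algebra the indecomposable injectives are the $\mathcal{E}_i = D(e_iA)$, and $\mathcal{E} = \bigoplus_i \mathcal{E}_i$ is a (finite) direct sum over the simple modules; every injective is a direct sum of copies of the $\mathcal{E}_i$. Thus $\mathfrak{F}$ is the class of modules admitting no nonzero map \emph{from} any indecomposable injective, equivalently no nonzero map from $\mathcal{E}$. The heart of the argument is then to transfer the vanishing condition across the Auslander--Reiten translate. Here I would invoke the Auslander--Reiten formula, which for a hereditary algebra takes the clean form
\[
\zhom{\tau\mathcal{E}}{K} \;\cong\; D\,\ext^1_A(K,\mathcal{E}),
\]
together with the fact that $\tau$ is an equivalence from the non-injective indecomposables to the non-projective indecomposables. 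Because $A$ is hereditary, $\ext^1$ is the only obstruction to $\zhom{\mathcal{E}}{K}$-type computations, and this should let me translate ``no maps from $\tau\mathcal{E}$'' into ``no maps from $\mathcal{E}$,'' establishing $\{K : \zhom{\tau\mathcal{E}}{K}=0\} = \mathfrak{F}$.

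The main obstacle, and the step needing the most care, is controlling the injective indecomposables under $\tau$: $\tau$ is defined only on the non-projective part, and injective-projective modules (and the interplay of the AR formula at the boundary of the AR quiver) must be handled separately, checking that the summands of $\mathcal{E}$ that happen to be projective do not cause the cogeneration to fail. I would therefore verify directly, for each indecomposable injective $\mathcal{E}_i$, that the contribution of $\tau\mathcal{E}_i$ (or of $\mathcal{E}_i$ itself when it is projective-injective) to the cogenerated torsion-free class is exactly the vanishing of $\zhom{\mathcal{E}_i}{K}$. Once the injective indecomposables are accounted for uniformly, collecting them shows $\tau\mathcal{E}$ cogenerates $\mathbb{T}$, and Proposition \ref{hereditarynoetherianring} delivers that $\tau\mathcal{E}$ is $si$-poor.
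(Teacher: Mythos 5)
Your high-level skeleton (reduce, via Proposition \ref{hereditarynoetherianring}, to the statement that $\tau\mathcal{E}$ cogenerates the torsion theory $(\mathfrak{E},\mathfrak{F})$) agrees with the paper's, but the execution reverses the variance of $\Hom$ in two places, and both reversed statements are false. First, in the paper's terminology, ``$\tau\mathcal{E}$ cogenerates $(\mathfrak{E},\mathfrak{F})$'' means $\mathfrak{E} = \{N : \zhom{N}{\tau\mathcal{E}} = 0\}$, i.e.\ it concerns maps \emph{into} $\tau\mathcal{E}$; this is exactly what Proposition \ref{hereditarynoetherianring} requires ($\{N : \zhom{N}{K} = 0\} = \mathfrak{E}$). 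Your reduction instead targets $\mathfrak{F} = \{K : \zhom{\tau\mathcal{E}}{K} = 0\}$, which is the assertion that $\tau\mathcal{E}$ \emph{generates} the torsion theory --- a different (and here false) statement. Second, the formula you invoke, $\zhom{\tau\mathcal{E}}{K} \cong D\ext^{1}_{A}(K,\mathcal{E})$, is not the Auslander--Reiten formula; for hereditary $A$ that formula reads $\zhom{K}{\tau M} \cong D\ext^{1}_{A}(M,K)$, again computing maps \emph{into} a translate. Both errors are visible in the smallest example: let $A$ be the path algebra of the quiver $1 \rightarrow 2$. Then $\mathcal{E} = S(1)\oplus P(1)$ and $\tau\mathcal{E} = \tau S(1) = S(2)$ (the projective-injective summand $P(1)$ is killed by $\tau$). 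Here $\mathfrak{F} = \mathrm{add}(S(2))$ while $\{K : \zhom{\tau\mathcal{E}}{K} = 0\} = \mathrm{add}(S(1))$, so your target equality fails; and taking $K = S(2)$ gives $\zhom{\tau\mathcal{E}}{K} \cong k$ but $\ext^{1}_{A}(K,\mathcal{E}) = 0$ since $K$ is projective, so your formula fails as well. (A smaller slip: your reason for $d(\tau\mathcal{E})=0$, namely that no nonzero finite length module is divisible, is wrong --- $\mathcal{E}$ itself is a finite length injective; the correct reason is that $\tau$ sends non-projective indecomposables to non-injective indecomposables, so $\tau\mathcal{E}$ has no injective submodules.)

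With the variance corrected, your strategy does go through and gives a legitimate alternative to the paper's argument: by the hereditary AR formula, $\zhom{N}{\tau\mathcal{E}} \cong D\ext^{1}_{A}(\mathcal{E},N)$, and $\ext^{1}_{A}(\mathcal{E},N) = 0$ if and only if $N$ is injective (one direction is trivial; for the other, the cokernel of $N \hookrightarrow E(N)$ is injective because $A$ is hereditary, hence lies in $\mathrm{add}(\mathcal{E})$, so the vanishing of $\ext^{1}$ splits the embedding). This yields $\{N : \zhom{N}{\tau\mathcal{E}} = 0\} = \mathfrak{E}$, which is precisely cogeneration, and Proposition \ref{hereditarynoetherianring} finishes. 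The paper reaches the same point without unwinding the AR formula: it checks that $\mathcal{E}$ is a tilting module ($\ext^{1}_{A}(\mathcal{E},\mathcal{E})=0$, $\mathrm{pr.dim}(\mathcal{E}) \leq 1$, and the number of summands equals the rank of $K_0(A)$), so that by the tilting theorem $(\mathrm{Gen}(\mathcal{E}), \mathrm{Cogen}(\tau\mathcal{E}))$ is a torsion pair, and then observes $\mathrm{Gen}(\mathcal{E}) = \mathfrak{E}$; the two routes are essentially the same computation, packaged either through the AR formula or through tilting theory.
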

\begin{proof}
Since $\mathcal{E}$ is injective, $\text{Ext}^{1}_{A}(\mathcal{E}, \mathcal{E}) = 0$. Since $A$ is hereditary, $\text{pr.dim}(\mathcal{E}) \leq 1$. The number of non-isomorphic indecomposable summands of $\mathcal{E}$ equals the rank of the Grothendieck group $K_0(A)$. Then, $\mathcal{E}$ is a tilting $A$-module, see \cite[Corollary VI.4.4]{assem}. It follows that $(\text{Gen}(\mathcal{E}),\text{Cogen}(\tau\mathcal{E}))$ is a torsion theory \cite[Theorem VI.2.5]{assem}. Note that $\text{Gen}(\mathcal{E})$ is the class of injective modules. Hence, $\tau\mathcal{E}$ is an indigent module.
\end{proof}

\begin{example}
Let $A$ be the path algebra of the quiver

$$
\xymatrix{1 \ar[r] & 2 & 3 \ar[l] \ar[r] & 4}
$$

The indecomposable projectives are $P(1) = K \rightarrow K \leftarrow 0 \rightarrow 0$, $P(2) = 0 \rightarrow K \leftarrow 0 \rightarrow 0$, $P(3) = 0 \rightarrow K \leftarrow K \rightarrow K$, and $P(4) = 0 \rightarrow 0 \leftarrow 0 \rightarrow K$; and the indecomposable injectives $I(1) = K \rightarrow 0 \leftarrow 0 \rightarrow 0$, $I(2) = K \rightarrow K \leftarrow K \rightarrow 0$, $I(3) = 0 \rightarrow 0 \leftarrow K \rightarrow 0$ and $P(4) = 0 \rightarrow 0 \leftarrow K \rightarrow K$. The Auslander-Reiten quiver of $A$, $\Gamma(A)$, is

$$
\xymatrix{ & 1100 \ar[dr] & & 0011 \ar[dr] \ar@{.>}[ll]^{\tau} & \\ 
0100 \ar[ur] \ar[dr] & & 1111 \ar[ur] \ar[dr] \ar@{.>}[ll]^{\tau} & & 0010 \ar@{.>}[ll]^{\tau} \\ 
& 0111 \ar[ur] \ar[dr] & & 1110 \ar[ur] \ar[dr] \ar@{.>}[ll]^{\tau} & \\ 
0001 \ar[ur] & & 0110 \ar@{.>}[ll]^{\tau} \ar[ur] & & 1000 \ar@{.>}[ll]^{\tau}}
$$

where we represent each module by its dimension vector. Now, the sum of indecomposable injectives is $E = 1000 \oplus 1110 \oplus 0010 \oplus 0011$, so $\tau E = 0110 \oplus 0111 \oplus 1111 \oplus 1100$ is $si$-poor.
\end{example}



\section{Bounds in the subprojectivity domain of a module.}
\label{p-indigentSection}

In this section, we investigate both upper and lower bounds one may impose on the subprojectivity domain of a module. Recall that a module is said to be $sp$-poor, or $p$-indigent, if its subprojectivity domain consists precisely of the projective modules. We will show next that this condition may be softened with equivalent results.

\begin{prop}
Consider the following conditions on a module $M$.
\begin{enumerate}
\item $\subpr{M} = \{P \in \modR : P \; \text{is projective}\}$.
\item $\subpr{M} \subseteq \{P \in \modR : P \; \text{is quasi-projective}\}$.
\item $\subpr{M} \subseteq \{P \in \modR : P \; \text{is discrete}\}$. 
\item $\subpr{M} \subseteq \{P \in \modR : P \; \text{is quasi-discrete}\}$.
\end{enumerate}
Then, (1) and (2) are equivalent, and the four conditions are equivalent if $R$ is a right perfect ring.
\end{prop}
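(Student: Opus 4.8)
The plan is to establish the cycle in two stages: first prove $(1)\Leftrightarrow(2)$ with no hypothesis on $R$, and then, assuming $R$ right perfect, close the loop $(2)\Rightarrow(3)\Rightarrow(4)\Rightarrow(1)$. The governing idea throughout is that if $N\in\subpr{M}$ and $P$ is a projective module admitting an epimorphism $p\colon P\to N$, then $N\oplus P\in\subpr{M}$: indeed $P\in\subpr{M}$ by Proposition \ref{basicfact2}, and closure under finite direct sums is Proposition \ref{closure of finite direct sums}. Consequently any structural hypothesis imposed on $\subpr{M}$ is inherited by $N\oplus P$, and I will exploit this to force $p$ to split, which makes $N$ projective.

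For $(1)\Rightarrow(2)$ there is nothing to prove, since projective modules are quasi-projective. For $(2)\Rightarrow(1)$ I take $N\in\subpr{M}$, choose a free $F$ with an epimorphism $p\colon F\to N$, and note that $N\oplus F\in\subpr{M}$ is quasi-projective by hypothesis. Writing $\rho\colon N\oplus F\to N$, $\rho(n,x)=p(x)$, for the quotient map and $f\colon N\oplus F\to N$, $f(n,x)=n$, for the first projection, quasi-projectivity of $N\oplus F$ lets me lift $f$ through $\rho$ to an endomorphism $\tilde f$ with $\rho\tilde f=f$. Taking the $F$-component of $\tilde f$ and restricting to the summand $N\oplus 0$ yields $s\colon N\to F$ with $ps=1_N$, so $p$ splits and $N$ is projective. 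Since $\subpr{M}$ always contains the projectives, this gives the equality asserted in $(1)$.

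Now assume $R$ is right perfect. The implications $(2)\Rightarrow(3)\Rightarrow(4)$ are a matter of the class inclusions recorded in the preliminaries: over a right perfect ring every quasi-projective module satisfies (D1) and (D2), hence is discrete, and every discrete module is quasi-discrete; so a subprojectivity domain contained in the quasi-projective modules lies in the discrete ones, and those lie among the quasi-discrete ones. The crux is $(4)\Rightarrow(1)$. Here I use that $R$ perfect supplies a projective cover $\pi\colon P\to N$ of any $N\in\subpr{M}$, with $K:=\Ker\pi\ll P$. Then $X:=N\oplus P\in\subpr{M}$ is quasi-discrete by $(4)$, so it satisfies (D3). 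Inside $X$ the graph $A=\{(\pi(x),x):x\in P\}$ is a direct summand (with complement $N\oplus 0$), and so is $0\oplus P$; one checks $A+(0\oplus P)=X$ and $A\cap(0\oplus P)=0\oplus K$. By (D3), $0\oplus K$ is a direct summand of $X$. But $K\ll P$ forces $0\oplus K\ll X$, and a submodule that is simultaneously a direct summand and superfluous must vanish; hence $K=0$, $\pi$ is an isomorphism, and $N\cong P$ is projective.

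The main obstacle is this final step: realizing that quasi-discreteness should be applied not to $N$ itself but to $N\oplus P$ built from a projective cover, and that the right pair of summands, namely the graph of $\pi$ together with the second factor, is exactly what makes (D3) collapse the superfluous kernel. The general $(2)\Rightarrow(1)$ argument serves as the model, with the quasi-projective lifting replaced in the perfect case by the (D3) summand-intersection mechanism. I would also verify the two routine facts used without fanfare: that superfluity passes upward along the inclusion $0\oplus P\le X$, and that a superfluous direct summand is zero.
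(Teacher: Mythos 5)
Your proof is correct, and while your $(1)\Leftrightarrow(2)$ argument is essentially the paper's (both form the direct sum of $N\in\subpr{M}$ with a projective, respectively free, module mapping onto it, note that this sum lies in $\subpr{M}$ by Propositions \ref{basicfact2} and \ref{closure of finite direct sums}, and use quasi-projectivity to split the covering epimorphism --- you merely spell out the lifting that the paper compresses into ``so $K$ is $P$-projective''), your proof of $(4)\Rightarrow(1)$ takes a genuinely different route. The paper leans on the structure theory of \cite{mohamed90}: it decomposes $K\in\subpr{M}$ into hollow summands (Theorem 4.15), decomposes the projective cover of each such summand into hollow projectives, applies Theorem 4.48 to get that each hollow piece is relatively projective with respect to each projective hollow piece, and then reassembles using the fact that over a right perfect ring projectivity domains are closed under arbitrary direct sums. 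You bypass all of that: taking a projective cover $\pi\colon P\to N$ of $N\in\subpr{M}$, you apply (D3) directly to $X=N\oplus P\in\subpr{M}$ with the two summands being the graph of $\pi$ and $0\oplus P$; their sum is $X$ and their intersection is $0\oplus\Ker\pi$, which (D3) makes a direct summand, while $\Ker\pi\ll P\le X$ makes it superfluous in $X$, hence zero, so $\pi$ is an isomorphism. The computations you indicate (the graph is a summand with complement $N\oplus 0$, superfluity passes up along $P\le X$ by the modular law, and a superfluous direct summand vanishes) are all correct. Your version is shorter and essentially self-contained, needing only the definition of (D3), existence of projective covers, and elementary facts about small submodules; in effect it reproves, in the special case at hand, the lemma underlying the machinery the paper cites. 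What the paper's route buys is consistency with the toolkit it quotes throughout the section (Theorems 4.15, 4.41, 4.48 of \cite{mohamed90}), but as a proof of this implication yours is arguably the cleaner one.
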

\begin{proof}
(1) $\Rightarrow$ (2) is clear. Assume (2). Let $K \in \subpr{M}$, and let $P$ be a projective module that covers $K$. By Proposition \ref{closure of finite direct sums}, $K \oplus P \in \subpr{M}$. Then, $K \oplus P$ is quasi-projective, so $K$ is $P$-projective and hence projective. If $R$ is right perfect, then every quasi-projective module is discrete (cf. \cite[Theorem 4.41]{mohamed90}), so we have (1) $\Rightarrow$ (2) $\Rightarrow$ (3) $\Rightarrow$ (4). To show (4) $\Rightarrow$ (1), assume (4) and let $K \in \subpr{M}$. Then, $K$ is quasi-discrete, so by \cite[Theorem 4.15]{mohamed90} there exists a decomposition $K = \bigoplus_{i \in I} K_i$, where each $K_i$ is a quasi-discrete hollow module. We show that each $K_i$ is projective. Indeed, let $P$ be the projective cover of $K_i$. Since $R$ is perfect, $P$ is quasi-discrete, so $P = \bigoplus_{j \in J} P_j$, where each $P_j$ is a projective hollow module. By Proposition \ref{sums in subprojectivity domain}, each $P_j$ is in $\subpr{M}$. Then, $P_j \oplus K_i \in \subpr{M}$, so $P_j \oplus K_i$ is quasi-discrete. Then, by \cite[Theorem 4.48]{mohamed90} $K_i$ is $P_j$-projective. Since $R$ is right perfect, projectivity domains are closed under arbitrary direct sums (\cite[Exercise 7.16]{anderson92}), which implies that $K_i$ is $P$-projective. Then, $K_i$ is projective.
\end{proof}

Note that if $R$ is not right perfect then the implication (2) $\Rightarrow$ (3) does not hold as, by \cite[Theorem 4.41]{mohamed90}, a ring is right perfect if and only if every quasi-projective right module is discrete.


\begin{definition}
A ring $R$ is called {\bf right manageable} if there exist a set $S$ of  non-projective right $R$-modules such that for every non-projective $R$-module $M$, there exists $A \in S$ such that $A$ is isomorphic to a direct summand of $M$. For convenience, we refer to the set  $S$ as the manageable set associated with $R$.
\end{definition}

Recall that a ring $R$ is said to be $\Sigma$-cyclic if every right $R$-module is a direct sum of cyclic modules. See \cite[Chapter 25]{faith76}. 
\begin{example}
If $R$ is a right $\Sigma$-cyclic ring, then $R$ is  right manageable. In particular, an artinian serial ring is both right and left manageable.
\end{example}

\begin{prop}\label{manageable have pindigents}
Every manageable ring $R$ has an $sp$-poor module.
\end{prop}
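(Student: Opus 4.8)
The plan is to construct an $sp$-poor module directly from the manageable set $S$ by taking the direct sum of all its members (up to isomorphism), and then to verify that this sum has subprojectivity domain equal to the class of projective modules. Concretely, I would let $S = \{A_i\}_{i \in I}$ be the manageable set associated with $R$ and set $M := \bigoplus_{i \in I} A_i$. The candidate claim is that $M$ is $sp$-poor, i.e., $\subpr{M} = \{P \in \modR : P \text{ is projective}\}$.

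First I would establish the containment $\supseteq$, which is immediate: by Proposition \ref{basicfact2}, every projective module lies in $\subpr{M}$, so the class of projectives is always contained in any subprojectivity domain. The substance of the argument is the reverse containment $\subseteq$. So suppose $N \in \subpr{M}$ and assume, toward a contradiction, that $N$ is not projective. Because $R$ is manageable, there exists some $A_i \in S$ that is isomorphic to a direct summand of $N$. The key structural tool is Proposition \ref{subprojectivity domains of sums}, which gives $\subpr{M} = \bigcap_{i \in I} \subpr{A_i}$; in particular $N \in \subpr{M} \subseteq \subpr{A_i}$. Now I would invoke Proposition \ref{sums in subprojectivity domain}: since $N \in \subpr{A_i}$ and $A_i$ (up to isomorphism) is a direct summand of $N$, it follows that $A_i \in \subpr{A_i}$. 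By Proposition \ref{basicfact1}, $A_i \in \subpr{A_i}$ forces $A_i$ to be projective, contradicting the fact that every member of the manageable set $S$ is non-projective. Hence $N$ must be projective, completing the containment $\subseteq$.

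The main obstacle I anticipate is set-theoretic rather than conceptual: the manageable set $S$ is literally a \emph{set}, so the direct sum $\bigoplus_{i \in I} A_i$ is a well-defined module, and one should double-check that Proposition \ref{subprojectivity domains of sums} (which is stated for a set of modules indexed by $I$) applies verbatim. A second point that needs care is the passage "$N \in \subpr{A_i}$ and $A_i$ is a summand of $N$ imply $A_i \in \subpr{A_i}$": here Proposition \ref{sums in subprojectivity domain} is stated as "if $N \in \subpr{M}$ then every direct summand of $N$ is in $\subpr{M}$," so one applies it with the roles being $N \in \subpr{A_i}$ and $A_i$ a summand of $N$, concluding $A_i \in \subpr{A_i}$; this is exactly the form needed. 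With these two checks in place, the chain Proposition \ref{subprojectivity domains of sums} $\to$ Proposition \ref{sums in subprojectivity domain} $\to$ Proposition \ref{basicfact1} closes the argument cleanly, and no further case analysis or computation is required.
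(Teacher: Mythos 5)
Your proposal is correct and is essentially the paper's own proof: both take the direct sum of the manageable set as the candidate $sp$-poor module and reach the contradiction $A_i \in \subpr{A_i}$ via Propositions \ref{subprojectivity domains of sums}, \ref{sums in subprojectivity domain} and \ref{basicfact1}. The only (immaterial) difference is the order of application: the paper first passes the summand $C$ into $\subpr{X}$ and then intersects, while you first intersect to get $N \in \subpr{A_i}$ and then pass to the summand.
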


\begin{proof}
Let $S$ be the manageable set of modules associated with $R$.  Let $X=\bigoplus_{A \in S}A$.  We claim that $X$ is $sp$-poor.  To see this, let $B \in \subpr{X}$. If $B$ is not projective, then there exists $C \in S$ such that $C$ is isomorphic to a direct summand of $B$. By Proposition \ref{sums in subprojectivity domain} $C \in \subpr{X}$. By Proposition \ref{subprojectivity domains of sums}, $C \in \subpr{C}$ and by Proposition \ref{basicfact1}, $C$ is projective, a contradiction. Then, $B$ is projective and $X$ is $sp$-poor.
\end{proof}

If $R$ is an artinian chain ring then Proposition \ref{manageable have pindigents} implies that the direct sum of nonprojective cyclic right $R$-modules is $sp$-poor. The next proposition tells us that, in fact, for such a ring every non-projective right $R$-module is $sp$-poor.  This is an interesting discovery giving us a glance into the phenomenon of a ring $R$ having no subprojective middle class.  It should be noted that artinian chain rings also fail to have a subinjective middle class \cite{pinar11}. The study of rings without a projective or injective middle class has been undertaken in \cite{lopez10}, \cite{lopez11}, \cite{lopez12} and \cite{holston11}.
\begin{prop}
If $R$ is an artinian chain ring, then every non-projective module is $sp$-poor.
\end{prop}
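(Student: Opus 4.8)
The plan is to reduce everything to a statement about the indecomposable modules and then to settle that statement by a direct $\Hom$-computation. Since $R$ is an artinian chain ring it is in particular $\Sigma$-cyclic, and its radical $J = J(R)$ satisfies $R \supsetneq J \supsetneq J^2 \supsetneq \cdots \supsetneq J^n = 0$; the right ideals of $R$ are exactly the $J^k$, the nonzero indecomposable modules are exactly the cyclics $R/J^k$ ($1 \le k \le n$), and $R/J^k$ is projective precisely when $k = n$. Thus every module is a direct sum of copies of the $R/J^k$, and a module is non-projective iff it has a summand $R/J^k$ with $1 \le k \le n-1$. The claim will follow once I prove the key fact that $R/J^{\,j} \notin \subpr{R/J^{\,k}}$ whenever $1 \le j, k \le n-1$. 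Indeed, given a non-projective $M$, write $M \cong R/J^k \oplus M'$ with $1 \le k \le n-1$, so that $\subpr{M} \subseteq \subpr{R/J^k}$ by Proposition \ref{subprojectivity domains of sums}. If some non-projective $N$ lay in $\subpr{M}$, then $N$ would have a summand $R/J^{\,j}$ with $1 \le j \le n-1$, and Proposition \ref{sums in subprojectivity domain} would force $R/J^{\,j} \in \subpr{M} \subseteq \subpr{R/J^k}$, contradicting the key fact; hence $\subpr{M}$ contains only projectives and $M$ is $sp$-poor. (The diagonal case $j = k$ is already covered by Proposition \ref{basicfact1}, but it is the off-diagonal cases that make this stronger than Proposition \ref{manageable have pindigents}.)

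To prove the key fact I would use Lemma \ref{onlyfrees}. Because $R/J^{\,j}$ is cyclic, its projective cover is the canonical epimorphism $\pi \colon R \to R/J^{\,j}$, and every projective epimorphism onto $R/J^{\,j}$ factors through $\pi$; hence $R/J^k$ is $R/J^{\,j}$-subprojective iff the induced map $\pi_* \colon \Hom(R/J^k, R) \to \Hom(R/J^k, R/J^{\,j})$ is surjective. So the task reduces to exhibiting a homomorphism $f \colon R/J^k \to R/J^{\,j}$ that does not factor through $\pi$. The engine for this is the annihilator identity $\{ r \in R : rJ^k = 0 \} = J^{\,n-k}$: the left-hand side is a left ideal, hence a power $J^{\,t}$ of $J$ (left ideals of a chain ring are powers of $J$), and $J^{\,n-k}J^k = J^n = 0$ together with $J^{\,n-k-1}J^k = J^{\,n-1} \neq 0$ pin down $t = n-k$. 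Consequently every homomorphism $R/J^k \to R$ sends the generator into $J^{\,n-k}$, so every map in the image of $\pi_*$ has image contained in $\pi(J^{\,n-k}) = J^{\min(n-k,\,j)}/J^{\,j}$.

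Finally I would construct $f$ so as to overshoot this bound. Choosing $s \in J^{\,m} \setminus J^{\,m+1}$ with $m = \max(0, j-k)$ (such $s$ exists since $m < n$), the assignment $\overline{1} \mapsto s + J^{\,j}$ is well defined because $sJ^k \subseteq J^{\,m+k} \subseteq J^{\,j}$, and its image is $J^{\,m}/J^{\,j}$; that is, the generator has ``order'' exactly $m$ in $R/J^{\,j}$. A short verification of the inequality $\max(0, j-k) < \min(n-k,\,j)$, valid for all $1 \le j, k \le n-1$, then shows that this $f$ cannot lie in the image of $\pi_*$, so $f$ does not factor through $\pi$ and $R/J^k$ fails to be $R/J^{\,j}$-subprojective. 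I expect the main obstacle to be precisely this arithmetic heart of the argument: pairing the annihilator identity with the correct test map $f$ so as to certify non-liftability. Once that is in hand, the module-theoretic reductions are routine consequences of Propositions \ref{basicfact1}, \ref{subprojectivity domains of sums} and \ref{sums in subprojectivity domain}.
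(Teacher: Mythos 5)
Your proof is correct, and its outer skeleton coincides with the paper's: both decompose an arbitrary module into cyclic uniserials and use Propositions \ref{subprojectivity domains of sums}, \ref{sums in subprojectivity domain} and \ref{basicfact1} to reduce the statement to showing that no non-projective cyclic lies in the subprojectivity domain of a non-projective cyclic. Where you genuinely differ is in how that key fact is certified. The paper works with the ideals $p^kR$ rather than the quotients $R/J^k$ and argues by two asymmetric cases: for the inclusion $p^kR \hookrightarrow p^mR$ (case $k>m$) it uses the uniserial property of $R_R$ --- the submodules $\Ker g$ and $\Ima h$ must be comparable --- to contradict the injectivity of $f$, while for $k<m$ it sends $p^k \mapsto p^m$ and checks by an element computation that any lift pushes the image too deep into the radical filtration; the diagonal case is quoted from Proposition \ref{basicfact1}. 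You instead give a single uniform computation: $\Hom(R/J^k,R)$ is identified with the annihilator $J^{n-k}$, so every map in the image of $\pi_*$ has image inside $J^{\min(n-k,\,j)}/J^{\,j}$, whereas your test map has image $J^{\max(0,\,j-k)}/J^{\,j}$, and the inequality $\max(0,j-k)<\min(n-k,j)$ disposes of all pairs $1\le j,k\le n-1$ at once, diagonal included. You also make explicit a point the paper uses only implicitly: by Lemma \ref{onlyfrees}, together with the observation that every projective epimorphism onto $R/J^{\,j}$ factors through $\pi\colon R\to R/J^{\,j}$, failure of subprojectivity is exactly failure of surjectivity of $\pi_*$, so one fixed presentation suffices as the test. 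What the paper's route buys is concreteness (explicit elements, no separate annihilator lemma); what yours buys is the absence of a case split and a reusable criterion (``subprojective iff $\pi_*$ is onto'') that makes the depth bookkeeping transparent. One small remark: your set $\{r\in R : rJ^k=0\}$ is in fact a two-sided ideal, so you only need the classification of \emph{right} ideals as powers of $J$ (which is how the paper states the chain property); the appeal to left ideals is unnecessary.
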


\begin{proof}
Since $R$ is an artinian chain ring, every $R$-module is a direct sum of cyclic uniserial modules.  Consequently, it suffices to consider cyclic modules by Proposition \ref{sums in subprojectivity domain} and Proposition \ref{subprojectivity domains of sums}.  Because $R$ is an artinian chain ring, the ideals of $R$ are zero or the powers $J(R)^n$ of $J(R)$, the Jacobson radical of $R$.  Moreover, if $p \in J(R)$ but $p \notin J(R)^2$, then $J(R)^n=p^nR$ for every $n \geq 0$.  Hence, we have the finite chain for some positive integer $n$:
\[R \supset pR \supset p^2R \supset \ldots \supset p^nR=0.\]
Therefore, it is enough to show that $p^kR$ is $sp$-poor for every positive integer $k$.

Let $A=p^kR$, where $k \neq 0$ and let $g:R \rightarrow p^kR$ be the quotient map.  If $k > m$, then let $f:A \rightarrow p^mR$ be the inclusion map.  Assume there exists $h:A \rightarrow R$ such that $gh=f$.  Since $R$ is a chain ring either $\Ker g \subset \Ima h$ or $\Ima h \subset \Ker g$.  If $\Ima h \subset \Ker g$ then $gh=0$, a contradiction.  Hence, $\Ker g \subset \Ima h$.  But since $g$ is not monic, then $\Ker g \neq 0$.  Hence, there is a nonzero element $x \in A$ such that $0=gh(x)=f(x)$, a contradiction.  Thus, $p^mR \notin \subpr{A}$.

If $k < m$, then consider the homomorphism $f:A \rightarrow p^mR$, where $f(p^k)=p^m$.  Assume there exists $h:A \rightarrow R$ such that $gh=f$.  But then $p^m=f(p^k)=gh(p^k)=g(1)p^m \in p^{(2m)}R$, a contradiction.  Thus, $p^mR \notin \subpr{A}$.

Finally, we note that by Proposition \ref{basicfact1}, $p^kR \notin \subpr{p^kR}$, as $p^kR$ is not projective.
\end{proof}

\begin{example}
If $p$ is a prime, then $\mathbb{Z}/p^i\mathbb{Z}$ is a $p$-indigent $\left( \mathbb{Z}/p^k\mathbb{Z}\right)$-module for every $i < k$.
\end{example}

Now we investigate the existance of $sp$-poor modules over a semiprimary, right hereditary, left coherent ring. We choose this class of rings because it is precisely when the projective modules form a torsion-free class $\mathfrak{P}$, see Proposition \ref{refsug}. 

\begin{prop}\label{4.6}
Let $R$ be a semiprimary, right hereditary, left coherent ring. Let $\mathfrak{P}$ be the torsion-free class consisting of the projective $R$-modules, and let $\mathfrak{T}$ be the corresponding torsion class. Then, $R$ has an $sp$-poor module if and only if there exists a module $M$ that generates $(\mathfrak{T}, \mathfrak{P}).$
\end{prop}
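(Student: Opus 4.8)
The plan is to translate the statement about $sp$-poor modules into a statement about the torsion-free class $\mathfrak{P}$ and then pass through the machinery of basic $sp$-portfolios. First I would unwind the hypothesis: a module $M$ generates the torsion theory $(\mathfrak{T},\mathfrak{P})$ precisely when $(\mathfrak{T},\mathfrak{P}) = \mathbb{T}_M$, equivalently when its torsion-free class $\mathcal{F}_M = \{N \in \modR : \zhom{M}{N} = 0\}$ equals $\mathfrak{P}$. Since a torsion theory is recovered from its torsion-free class (the torsion class being forced by the $\Hom$-vanishing condition recalled in the preliminaries), the hypothesis ``$M$ generates $(\mathfrak{T},\mathfrak{P})$'' is nothing but the equality $\{N : \zhom{M}{N} = 0\} = \mathfrak{P}$. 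On the other side, $M$ is $sp$-poor exactly when $\subpr{M}$ equals the class $\mathfrak{P}$ of projective modules. So the whole proposition amounts to comparing $\subpr{M}$ with $\{N : \zhom{M}{N} = 0\}$, and the bridge between them is Proposition \ref{basic}.

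For the implication assuming a generator exists, I would take $M$ with $\{N : \zhom{M}{N} = 0\} = \mathfrak{P}$. Because $R$ is projective, $R \in \mathfrak{P}$, and hence $\zhom{M}{R} = 0$. Proposition \ref{basic} then gives that $\subpr{M}$ is basic, that is, $\subpr{M} = \{N : \zhom{M}{N} = 0\}$; combined with the hypothesis this yields $\subpr{M} = \mathfrak{P}$, so $M$ is $sp$-poor.

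For the converse, suppose $M$ is $sp$-poor, so $\subpr{M} = \mathfrak{P}$. Here I would invoke Proposition \ref{everyspbasic}: since $R$ is semiprimary, right hereditary and left coherent, every $sp$-portfolio is basic. In particular $\subpr{M} = \mathfrak{P}$ is basic, so there is a module $M_0$ (possibly different from $M$) with $\mathfrak{P} = \subpr{M_0} = \{N : \zhom{M_0}{N} = 0\} = \mathcal{F}_{M_0}$. Since the torsion-free class of $\mathbb{T}_{M_0}$ is exactly $\mathfrak{P}$, and a torsion theory is determined by its torsion-free class, $\mathbb{T}_{M_0} = (\mathfrak{T},\mathfrak{P})$; that is, $M_0$ generates $(\mathfrak{T},\mathfrak{P})$.

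The genuinely substantive input is Proposition \ref{everyspbasic}, used in the converse, which is what forces every $sp$-portfolio over this class of rings to be realizable as a $\Hom$-vanishing class $\{N : \zhom{M_0}{N}=0\}$; without basicness there would be no reason for the class of projective modules, once it is an $sp$-portfolio, to coincide with a torsion-free class of the required $\Hom$-vanishing form. The only other ingredient is the purely formal fact that a torsion theory is recovered from its torsion-free class, which licenses the passage from equality of torsion-free classes to equality of torsion theories and requires nothing beyond the definitions recalled in the preliminaries. I do not expect the routine observations (that $R \in \mathfrak{P}$, and hence $\zhom{M}{R}=0$) to pose any obstacle.
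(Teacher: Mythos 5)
Your proof is correct and follows essentially the same route as the paper: the forward direction uses $R\in\mathfrak{P}$ to get $\zhom{M}{R}=0$ and then Proposition \ref{basic}, exactly as the paper does implicitly. For the converse, the paper simply inlines the decomposition argument from the proof of Proposition \ref{everyspbasic} (producing an explicit generator $N$ as a direct summand of the $sp$-poor module), whereas you cite Proposition \ref{everyspbasic} as a black box to obtain some module $M_0$ realizing basicness --- a cosmetic difference, not a different method.
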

\begin{proof}
Assume there exists a module $M$ that generates $(\mathfrak{T}, \mathfrak{P})$. Then, since $(\mathfrak{T}, \mathfrak{P})$ is a torsion theory, the class $\{N: \zhom{M}{N} = 0\} = \mathfrak{P}$. Then, $M$ is $sp$-poor. Now, assume $M$ is an $sp$-poor module. Let $M'$ be the smallest module that yields a projective quotient, so $M \cong M/M' \oplus N$ and $\subpr{M} = \subpr{N}$, with $N \in \mathfrak{T}$. Now $\subpr{N}$ is basic, and $\subpr{N} = \mathfrak{P}$. Therefore, $N$ is a generator of $(\mathfrak{T}, \mathfrak{P})$.
\end{proof}

Now we investigate the existence of an $sp$-poor $\Z$-module. Note that $\Z$ is not perfect, so we cannot apply the preceding proposition. In fact, we have the following.

\begin{prop}\label{4.8}
Let $R$ be a ring which is not semiprimary, or not right hereditary, or not left coherent. If there exists an $sp$-poor module $M$, then $\zhom{M}{R} \not= 0$.
\end{prop}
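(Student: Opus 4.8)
The plan is to argue by contraposition: assume $M$ is $sp$-poor and suppose, toward a contradiction, that $\zhom{M}{R} = 0$. The first step is to invoke Proposition \ref{basic}: since $\zhom{M}{R} = 0$, condition (2) of that proposition holds, so $\subpr{M}$ is basic, meaning $\subpr{M} = \{N \in \modR : \zhom{M}{N} = 0\}$. Because $M$ is $sp$-poor, the left-hand side is exactly the class $\mathfrak{P}$ of projective modules, and hence $\mathfrak{P} = \{N \in \modR : \zhom{M}{N} = 0\}$.

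The key observation is that the right-hand class is always a torsion-free class, that is, closed under submodules, extensions and arbitrary direct products (as recalled in the text preceding the corollary to Proposition \ref{basic}). Consequently $\mathfrak{P}$ inherits all of these closure properties. I would then extract the ring-theoretic consequences: closure of $\mathfrak{P}$ under submodules says precisely that submodules of projective modules are projective, i.e. $R$ is right hereditary; and closure of $\mathfrak{P}$ under arbitrary direct products says products of projective modules are projective, which by Chase's theorem \cite[Theorem 3.3]{chase60} forces $R$ to be right perfect and left coherent.

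Combining these, $R$ is right hereditary, right perfect and left coherent. The final step is to apply Proposition \ref{refsug}, which upgrades this to the statement that $R$ is semiprimary, right hereditary and left coherent. This contradicts the hypothesis that $R$ fails at least one of these three conditions, and the contradiction yields $\zhom{M}{R} \neq 0$.

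I do not expect a genuine obstacle here; the entire content lies in recognizing that the assumption $\zhom{M}{R} = 0$ promotes $\subpr{M}$ to a full torsion-free class via Proposition \ref{basic}, after which the structural consequences follow immediately. The one point requiring care is the closure-under-products step: it should invoke Chase's characterization \cite[Theorem 3.3]{chase60} applied to $\mathfrak{P}$ directly, rather than Proposition \ref{closure under products} (which concerns the subprojectivity domain of an arbitrary module and is not quite the hypothesis available here).
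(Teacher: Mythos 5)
Your proof is correct and follows essentially the same route as the paper's: the paper also deduces from $\zhom{M}{R}=0$ (via Proposition \ref{basic}) that $\subpr{M}$ is a torsion-free class, and then notes this would force the class of projective modules to be closed under submodules and arbitrary products, contradicting the hypothesis via \cite[Theorem 3.3]{chase60} and Proposition \ref{refsug}. You have merely made explicit the appeals to Chase's theorem and to Proposition \ref{refsug} that the paper uses tacitly, which is a faithful (and slightly more careful) rendering of the same argument.
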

\begin{proof}
If $\zhom{M}{R} = 0$, then $\subpr{M}$ is a torsion-free class. But this cannot happen, as the class of projective modules is either not closed under submodules or not closed under arbitrary direct products. Then, $\zhom{M}{R} \not= 0$.
\end{proof}

\begin{cor}
If $M$ is an $sp$-poor $\Z$-module, then $\Hom_{\Z}(M,\Z) \neq 0$ and, consequently, $\Hom_{\Z}(M, N) \neq 0$ for every abelian group $N$.
\end{cor}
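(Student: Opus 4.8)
First I would dispatch the assertion $\zhom{M}{\Z} \neq 0$ directly from Proposition \ref{4.8}: one only needs to check that $\Z$ meets that proposition's hypotheses, and it does because $\Z$ is not semiprimary. Concretely, although $J(\Z) = 0$ is (trivially) nilpotent, the quotient $\Z/J(\Z) \cong \Z$ is not a semisimple artinian ring. So, with $M$ assumed $sp$-poor, Proposition \ref{4.8} immediately yields a nonzero homomorphism $M \to \Z$.

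For the second assertion, the plan is to upgrade such a nonzero map to a surjection onto $\Z$ and then push forward into an arbitrary target. Given a nonzero $f \in \zhom{M}{\Z}$, its image is a nonzero subgroup $k\Z$ of $\Z$ for some $k \geq 1$, and $k\Z \cong \Z$ via $kn \mapsto n$; composing $f$ with this isomorphism produces a surjection $g: M \twoheadrightarrow \Z$. Next, for any nonzero abelian group $N$ I would fix a nonzero element $x \in N$ and let $h: \Z \to N$ be the map $1 \mapsto x$. Then $hg: M \to N$ sends any preimage of $1$ under $g$ to $x \neq 0$, so $\zhom{M}{N} \neq 0$; equivalently, $\Ima(hg) = \Ima h \neq 0$ since $g$ is onto. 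The degenerate case $N = 0$ is vacuous and is the tacit exclusion in the phrase ``every abelian group''.

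I expect the only genuine subtlety to be the reason one must pass through a surjection rather than composing $f$ with an arbitrary map $\Z \to N$ directly: when $N$ is torsion the naive composite can vanish (for instance, when the order of $x$ divides every value of $f$), whereas precomposition with an epimorphism annihilates no nonzero homomorphism. As an alternative route that sidesteps even this point, I could instead argue from $sp$-poorness itself: by Proposition \ref{hom=0}, if $\zhom{M}{N} = 0$ then $N \in \subpr{M}$, forcing $N$ to be projective, i.e. free; but a nonzero free abelian group has $\Z$ as a direct summand, and the already-established $\zhom{M}{\Z} \neq 0$ then contradicts $\zhom{M}{N} = 0$.
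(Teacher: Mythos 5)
Your proof is correct and follows essentially the same route as the paper: the first assertion is exactly Proposition \ref{4.8} applied to $R=\Z$ (which, as you correctly check, fails only semiprimality among that proposition's three conditions, being hereditary and coherent), and the second assertion rests on the same key fact the paper uses, namely that a nonzero homomorphism $M\to\Z$ has image $k\Z\cong\Z$; whether one then splits off $\Z$ as a direct summand of $M$ (as the paper does in the paragraph following the corollary) or, as you do, composes the resulting surjection $M\twoheadrightarrow\Z$ with the map $1\mapsto x$ is immaterial. Your side remarks --- that $N=0$ is tacitly excluded, and that composing $f$ with an arbitrary map $\Z\to N$ can fail when $N$ is torsion --- are both accurate.
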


Since $\Z$ is a principal ideal domain, the last corollary tells us that if $M$ is an $sp$-poor $\Z$-module, then $M \cong \Z \oplus N$, and $\subpr{M} = \subpr{\Z}\cap\subpr{N} = \subpr{N}$, so $N$ is also an $sp$-poor $\Z$-module. Iterating the process and taking a direct limit, we have the following result.

\begin{cor}\label{pindigentverybig}
Let $M$ be a $p$-indigent $\Z$-module. Then, there exist a submodule $N \leq M$ such that $N \cong \Z^{(\mathbb{N})}$.
\end{cor}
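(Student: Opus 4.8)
The plan is to peel a free rank-one summand off $M$ repeatedly, each time producing a smaller $sp$-poor module, and then to assemble the summands obtained into an internal direct sum isomorphic to $\Z^{(\mathbb{N})}$. The engine driving the iteration is the preceding corollary, which guarantees that any $sp$-poor $\Z$-module $L$ satisfies $\zhom{L}{\Z} \neq 0$; this is exactly what allows the process to continue at every stage.

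First I would make precise the single splitting step. Given an $sp$-poor module $L$, choose a nonzero $f : L \to \Z$. Its image is a nonzero subgroup of $\Z$, hence of the form $n\Z \cong \Z$ for some $n \geq 1$, so $f$ corestricts to an epimorphism $L \twoheadrightarrow \Ima(f) \cong \Z$. Because $\Z$ is free, and therefore projective, this epimorphism splits, yielding an internal decomposition $L = C \oplus N$ with $C \cong \Z$ and $N = \Ker(f)$. I would then invoke Proposition \ref{subprojectivity domains of sums} together with the fact that $\subpr{\Z} = \modR$ (since $\Z$ is projective) to get $\subpr{L} = \subpr{C} \cap \subpr{N} = \subpr{N}$; as $L$ is $sp$-poor this forces $\subpr{N}$ to consist of the projectives only, i.e. $N$ is again $sp$-poor. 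Thus each splitting step reproduces the hypothesis on the complementary summand.

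Next I would iterate this starting from $L = M$, obtaining a descending chain $M = N_0 \supseteq N_1 \supseteq N_2 \supseteq \cdots$ of $sp$-poor submodules together with internal splittings $N_{k-1} = C_k \oplus N_k$ with each $C_k \cong \Z$. Unwinding these gives, for every $k$, the internal decomposition $M = C_1 \oplus \cdots \oplus C_k \oplus N_k$. I would then take $N := \sum_{k \geq 1} C_k$ as the candidate submodule.

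The one point that requires genuine care — the main obstacle — is verifying that this infinite internal sum $\sum_{k} C_k$ is direct, so that $N \cong \bigoplus_k C_k \cong \Z^{(\mathbb{N})}$. This reduces to a finiteness observation: any relation among elements of the $C_k$ involves only finitely many indices, say $k \leq n$, and $C_1, \ldots, C_n$ are in direct sum because they occur as distinct summands in the finite decomposition $M = C_1 \oplus \cdots \oplus C_n \oplus N_n$; hence every such relation is trivial. With directness established, each $C_k \cong \Z$ gives $N \cong \Z^{(\mathbb{N})}$, completing the argument. This is the rigorous substitute for the informal passage to a direct limit in the surrounding text.
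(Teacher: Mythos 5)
Your proof is correct and takes essentially the same approach as the paper: the text preceding the corollary uses $\Hom_{\Z}(M,\Z)\neq 0$ (from Proposition \ref{4.8}) to split $M \cong \Z \oplus N$ with $N$ again $sp$-poor via Proposition \ref{subprojectivity domains of sums}, and then concludes by ``iterating the process and taking a direct limit.'' Your explicit verification that the internal sum $\sum_{k} C_k$ is direct is simply a rigorous rendering of that informal final step.
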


Our next goal is to show that the $\Z$-modules $T=\prod_i (\Z/p_i\Z)$ and $S=\left(\prod_i (\Z/p_i\Z\right)/\left(\bigoplus_i \Z/p_i\Z)\right)$ are not $sp$-poor, where $p_1<p_2<\ldots$ are the rational primes in increasing order.  To do so, we will need the following result, which can be found in \cite{fuchs}. For each $i \in \mathbb{N}$, let $e_i \in \baer$ be the standard unit vectors in $\baer$, that is, $e_i(j) = \delta_{ij}$, the Kronecker delta.

\begin{prop}\label{Fuchs result}
Every homomorphism $f:\baer \rightarrow \Z$ is completely determined by its action on $\cat$.  In particular, if $f(e_i)=0$ for all $i$, then $f=0$.
\end{prop}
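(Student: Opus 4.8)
The two assertions are really one. Since the $e_i$ generate $\cat$ and $f$ is additive, $f$ is determined by its restriction to $\cat$ precisely when the only homomorphism $\baer \to \Z$ killing every $e_i$ is the zero map. So the plan is to concentrate on the second sentence: assuming $f(e_i)=0$ for all $i$, I would deduce $f=0$. The immediate consequence of $f(e_i)=0$ is that $f$ annihilates every finitely supported sequence; hence for any $a=(a_n)\in\baer$ and any $N$, splitting $a$ into its first coordinates plus its tail gives $f(a)=f(0,\dots,0,a_{N},a_{N+1},\dots)$. Thus $f(a)$ depends only on arbitrarily late tails of $a$, and the whole task is to exploit this together with divisibility.

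The engine of the argument is a divisibility lemma, which I would establish first: if $b=(b_n)\in\baer$ and there is a prime $p$ with $p^{n}\mid b_n$ for every $n$, then $f(b)=0$. Indeed, for each fixed $N$ the tail of $b$ from position $N$ onward has all its entries divisible by $p^{N}$, so it equals $p^{N}c$ for some $c\in\baer$; killing the finite head then yields $f(b)=p^{N}f(c)$, whence $p^{N}\mid f(b)$. Letting $N\to\infty$ forces $f(b)=0$, since an integer divisible by arbitrarily high powers of $p$ must vanish.

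With the lemma in hand, the crux is to represent an \emph{arbitrary} $a=(a_n)$ as a sum of two sequences to which the lemma applies. Here I would bring in two distinct primes $p$ and $q$ (say $2$ and $3$) and use the Chinese Remainder Theorem coordinatewise: since $\gcd(p^n,q^n)=1$, for each $n$ I can choose $u_n\in\Z$ with $u_n\equiv 0 \pmod{p^n}$ and $u_n\equiv a_n\pmod{q^n}$, and set $v_n=a_n-u_n$, so that $v_n\equiv 0\pmod{q^n}$. Then $u=(u_n)$ and $v=(v_n)$ lie in $\baer$, the lemma gives $f(u)=0$ (divisibility by powers of $p$) and $f(v)=0$ (divisibility by powers of $q$), and since $u+v=a$ exactly we conclude $f(a)=f(u)+f(v)=0$.

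The step I expect to be the real obstacle is precisely this splitting. A single prime does not suffice: one cannot in general peel off a highly $p$-divisible piece and leave a remainder that $f$ also kills, because the residues of $a_n$ modulo $p^{k}$ obstruct it. The insight is that two coprime towers of moduli allow CRT to partition each coordinate into a part that evaporates $p$-adically and a part that evaporates $q$-adically, with both pieces simultaneously genuine elements of $\baer$. Once that decomposition is in place, everything reduces to the elementary divisibility lemma.
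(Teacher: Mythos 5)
Your proof is correct in every step: the reduction of the first assertion to the second, the tail-divisibility lemma (each tail of $b$ is $p^N$ times an element of $\baer$, so $p^N \mid f(b)$ for all $N$), and the coordinatewise CRT splitting of an arbitrary $a \in \baer$ into a $p$-power-divisible piece and a $q$-power-divisible piece are all airtight. Note that the paper itself gives no proof of this proposition---it is quoted as a known result from Fuchs---and your two-prime divisibility argument is precisely the classical Specker--Fuchs proof that the citation refers to, so in substance you have reconstructed the proof behind the reference rather than found a different route.
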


\begin{prop}\label{lion not p-indigent}
$\zhom{T}{\Z}=0$.
\end{prop}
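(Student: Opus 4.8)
The plan is to exploit the canonical surjection from the Baer--Specker group $\baer = \prod_i \Z$ onto $T$ in order to reduce the whole statement to Proposition \ref{Fuchs result}. Concretely, I would define $\pi : \baer \rightarrow T$ coordinatewise by reducing modulo the relevant prime, $\pi((z_i)_i) = (z_i + p_i\Z)_i$. This is a well-defined group homomorphism, and it is surjective because each coordinate map $\Z \rightarrow \Z/p_i\Z$ is surjective.

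Next, let $f : T \rightarrow \Z$ be an arbitrary homomorphism; the goal is to force $f = 0$. I would pass to the composite $f\pi : \baer \rightarrow \Z$, which by Proposition \ref{Fuchs result} is completely determined by its values on the standard unit vectors $e_i$. Thus it suffices to check that $(f\pi)(e_i) = 0$ for every $i$.

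The crux is that each $\pi(e_i)$ is a \emph{torsion} element of $T$: it is the element of $T$ whose $i$-th coordinate is $1 + p_i\Z$ and whose other coordinates vanish, so it is annihilated by $p_i$. Since $\Z$ is torsion-free, $(f\pi)(e_i) = f(\pi(e_i)) = 0$. Proposition \ref{Fuchs result} then yields $f\pi = 0$, and because $\pi$ is surjective every $t \in T$ can be written as $t = \pi(z)$, whence $f(t) = (f\pi)(z) = 0$. Therefore $f = 0$, which gives $\zhom{T}{\Z} = 0$.

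The only real obstacle is that the naive torsion argument is not enough on its own: killing torsion elements only disposes of the direct sum $\cat$-image, i.e. of $\bigoplus_i \Z/p_i\Z$, whereas $T$ contains elements of infinite order (for instance the constant sequence $(1 + p_i\Z)_i$), so one cannot conclude directly that $f$ vanishes on all of $T$. Pulling back along $\pi$ is precisely what converts the problem about $T$ into a problem about $\baer$, where the rigidity of $\zhom{\baer}{\Z}$ supplied by Proposition \ref{Fuchs result} does the work; I expect verifying that this reduction is legitimate (well-definedness and surjectivity of $\pi$, and that $\pi(e_i)$ is torsion) to be routine once the strategy is in place.
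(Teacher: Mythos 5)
Your proposal is correct and follows essentially the same route as the paper: both construct the coordinatewise reduction surjection $\baer \rightarrow T$, compose with $f$, use the fact that the images of the standard unit vectors $e_i$ are torsion (killed by $p_i$) while $\Z$ is torsion-free to get $(f\pi)(e_i)=0$, invoke Proposition \ref{Fuchs result} to conclude $f\pi=0$, and finish by surjectivity. The only difference is cosmetic phrasing — the paper computes $0=(fg)(p_ke_k)=p_k(fg)(e_k)$ directly rather than speaking of torsion elements — so there is nothing to fix.
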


\begin{proof}
Let $f \in \zhom{T}{\Z}$.  Define $P = \Z^{\mathbb{N}}$ and $g:P \rightarrow T$ by $[g(\alpha)](i):=\alpha(i)+p_i\Z \in \Z/p_i\Z$; that is
\[(\alpha_1, \alpha_2, \ldots) \mapsto (\alpha_1+p_1\Z, \alpha_2+p_2\Z, \ldots).\]
Then $g$ is epic and $fg \in \zhom{P}{\Z}$. 

Fix $k \in \mathbb{N}$.  Then $g(p_ke_k)=0$.  Hence $0=(fg)(p_ke_k)=p_k(fg)(e_k) \in \Z$.  Hence $(fg)(e_k)=0$.

So what we have shown is that $(fg)(e_i)=0$ for all $i$, and so by the last statement of Proposition \ref{Fuchs result}, $fg=0$.  Since $g$ is epic, it follows that $f=0$, which concludes the proof.
\end{proof}

\begin{prop}\label{tiger not p-indigent}
$\zhom{S}{\Z}=0$.
\end{prop}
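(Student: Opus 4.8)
The plan is to show that $\zhom{S}{\Z}=0$ by pulling maps back to the preceding Proposition \ref{lion not p-indigent}. Recall $S = T/\bigoplus_i(\Z/p_i\Z)$, where $T = \prod_i(\Z/p_i\Z)$. Let $\pi: T \rightarrow S$ be the canonical quotient map. The key observation is that any homomorphism $f: S \rightarrow \Z$ gives a composite $f\pi: T \rightarrow \Z$, and by Proposition \ref{lion not p-indigent} we already know $\zhom{T}{\Z} = 0$, so $f\pi = 0$. Since $\pi$ is an epimorphism, this forces $f = 0$, which is exactly what we want.

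First I would fix an arbitrary $f \in \zhom{S}{\Z}$ and form the composite $f\pi \in \zhom{T}{\Z}$. Then I would invoke Proposition \ref{lion not p-indigent} to conclude $f\pi = 0$. Finally, because $\pi$ is surjective, every element of $S$ is of the form $\pi(t)$ for some $t \in T$, so $f(\pi(t)) = 0$ for all $t$ means $f$ vanishes on all of $S$; hence $f = 0$. Since $f$ was arbitrary, $\zhom{S}{\Z} = 0$.

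I do not anticipate a genuine obstacle here: the entire argument is a one-line reduction to the previous proposition via the surjectivity of the quotient map, and the surjectivity of $\pi$ together with $\zhom{T}{\Z}=0$ does all the work. The only point that requires the slightest care is confirming that the composite $f\pi$ is indeed a well-defined homomorphism $T \to \Z$ (which is immediate, being a composite of homomorphisms) and that factoring a map out of a quotient is legitimate in this direction — but since we are composing \emph{with} the quotient map rather than factoring \emph{through} it, no universal-property verification is even needed. Thus the statement follows directly, and there is no substantive difficulty to isolate.
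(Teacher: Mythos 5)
Your proof is correct and is essentially identical to the paper's own argument: the paper also takes $f \in \zhom{S}{\Z}$, composes with the canonical epimorphism $h: T \rightarrow S$, applies Proposition \ref{lion not p-indigent} to conclude $fh = 0$, and uses surjectivity of $h$ to get $f = 0$. There is nothing to add or correct.
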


\begin{proof}
Let $f \in \zhom{S}{\Z}$.  Let $h:T \rightarrow S$ be the epic mapping each element to its equivalence class.  Then $fh \in \zhom{T}{\Z}$.  By \ref{lion not p-indigent}, $fh=0$.  Since $h$ is epic, it follows that $f=0$, which concludes the proof.
\end{proof}

From Propositions \ref{lion not p-indigent} and \ref{tiger not p-indigent}, we conclude that both $S$ and $T$ are not $sp$-poor $\Z$-modules. In view of Corollary \ref{pindigentverybig}, another natural candidate for an $sp$-poor $\Z$-module is the Baer-Specker group $\Z^{\mathbb{N}}$. However, we don't know if this is the case. Also note that, by \cite[Lemma 2.8']{lam99}, the group $\Z^{\mathbb{N}}/\Z^{(\mathbb{N})}$ is also not $p$-indigent.

Now we consider a lower bound on $\subpr{M}$ which is  inspired by \cite{amin}, where they define a module $M$ to be strongly soc-injective if, for every $N \in \modR$, every morphism $f: \Soc(N) \rightarrow M$ can be extended to a morphism $\overline{f}: N \rightarrow M$. It is not hard to see that the requirement of $M$ to be strongly soc-injective is equivalent to saying that $\ssmod\text{-}R \subseteq \subinj{M}$.

\begin{definition}
Let $M \in \modR$. We say that $M$ is strongly soc-projective if $\ssmod\text{-}R \subseteq \subpr{M}$.
\end{definition}

Of course, a strongly soc-projective module need not be projective, as the $\Z$-module $\mathbb{Q}_{\Z}$ shows. However, in the category of abelian groups, a finitely generated strongly-soc projective module is projective.

\begin{prop}\label{fg abelian groups}
Let $M$ be a finitely generated abelian group such that every semisimple module is in $\subpr{M}$. Then, $M$ is projective.
\end{prop}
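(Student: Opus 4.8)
The plan is to use the structure theorem for finitely generated abelian groups together with the behaviour of subprojectivity domains under direct sums. I would write $M \cong \Z^{r} \oplus T$, where $T$ is the (finite) torsion subgroup. Since over $\Z$ a module is projective precisely when it is free, proving the claim amounts to showing $T = 0$, so I would argue by contradiction and assume $T \neq 0$.

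If $T \neq 0$, then decomposing $T$ into cyclic $p$-groups exhibits a direct summand of $M$ isomorphic to $\Z/p^{n}\Z$ for some prime $p$ and some $n \geq 1$. By Proposition \ref{subprojectivity domains of sums}, $\subpr{M} \subseteq \subpr{\Z/p^{n}\Z}$. The key step is to identify this last domain. Since $\zhom{\Z/p^{n}\Z}{\Z} = 0$ (a torsion group admits no nonzero map into the torsion-free group $\Z$), Proposition \ref{basic} shows that $\subpr{\Z/p^{n}\Z}$ is basic, i.e. equals $\{N : \zhom{\Z/p^{n}\Z}{N} = 0\}$; this is exactly the first entry of the list following Proposition \ref{basic}, the class of abelian groups having no element of order $p$.

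Now I would invoke the hypothesis. The simple group $\Z/p\Z$ is semisimple, hence lies in $\subpr{M}$ by assumption, and therefore in $\subpr{\Z/p^{n}\Z}$. But the canonical surjection $\Z/p^{n}\Z \to \Z/p\Z$ is nonzero, so $\zhom{\Z/p^{n}\Z}{\Z/p\Z} \neq 0$ and hence $\Z/p\Z \notin \subpr{\Z/p^{n}\Z}$ (equivalently, $\Z/p\Z$ does have elements of order $p$). This contradiction forces $T = 0$, so $M \cong \Z^{r}$ is free and in particular projective.

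The argument is short; I do not expect a genuine obstacle, and the only real content is the identification of $\subpr{\Z/p^{n}\Z}$ via Proposition \ref{basic}. The point I would emphasize is that every nonzero finite abelian group contains a cyclic $p$-group summand whose subprojectivity domain already excludes the corresponding simple module, so that a single semisimple test module detects non-projectivity of the torsion part. The finite-generation hypothesis is used precisely to guarantee this finite cyclic decomposition.
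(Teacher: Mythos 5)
Your proof is correct and follows essentially the same route as the paper: both apply the structure theorem for finitely generated abelian groups and then use the fact that a cyclic $p$-power torsion summand $\Z/p^{n}\Z$ forces $\Z/p\Z \notin \subpr{M}$, contradicting the hypothesis. The only difference is that you spell out the justification (via Proposition \ref{subprojectivity domains of sums}, Proposition \ref{basic}, and the identification of $\subpr{\Z_{p^n}}$) that the paper leaves implicit by citing its earlier list of examples.
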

\begin{proof}
By the Fundamental Theorem of Finitely Generated Abelian groups, $M \cong \mathbb{Z}^n \oplus \mathbb{Z}_{p_1^{\alpha_1}} \oplus \dots \oplus \mathbb{Z}_{p_n^{\alpha_n}}$. If $\alpha_i \not= 0$ for some $i$, then $\mathbb{Z}_{p_i} \not\in \subpr{M}$, a contradiction. Then, $M \cong \mathbb{Z}^n$ is projective.
\end{proof}

If $R$ is a semiperfect ring, then every simple module has a projective cover, which has to be a local module, that is, with only one maximal submodule. In this case, we have the following proposition.

\begin{prop}\label{1}
Let $R$ be a semiperfect ring and let $M$ be a right $R$-module. Assume $S$ is a simple module such that $S \in \subpr{M}$. Then, either $\zhom{M}{S} = 0$ or $M = P(S) \oplus K$, where $P(S)$ stands for the projective cover of $S$.
\end{prop}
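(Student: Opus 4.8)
The plan is to analyze a simple module $S$ with $S \in \subpr{M}$ over a semiperfect ring $R$, splitting into two cases according to whether $\zhom{M}{S}$ vanishes. If $\zhom{M}{S} = 0$ we are done immediately, so the substance of the argument lies in the case where there is a nonzero morphism $f: M \to S$. Since $S$ is simple and $f \neq 0$, such an $f$ is automatically an epimorphism. The goal is then to show that this forces $M$ to split off a copy of the projective cover $P(S)$.

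First I would invoke the hypothesis that $R$ is semiperfect, so that the simple module $S$ has a projective cover $g: P(S) \to S$, where $P(S)$ is a local module (its kernel $\rad(P(S))$ is the unique maximal submodule and is superfluous in $P(S)$). Now I apply the assumption $S \in \subpr{M}$ to the nonzero epimorphism $f: M \to S$ together with the projective cover $g: P(S) \to S$ viewed as an epimorphism onto $S$; by Lemma \ref{onlyfrees}(2) (lifting against a single projective that covers $S$), there exists $h: M \to P(S)$ such that $gh = f$. The key observation is that $h$ must be an epimorphism: since $gh = f$ is onto, we have $g(h(M)) = S$, so $h(M) + \Ker(g) = P(S)$; but $\Ker(g) = \rad(P(S))$ is superfluous in $P(S)$, which forces $h(M) = P(S)$.

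Next I would use projectivity of $P(S)$ to split the surjection $h: M \to P(S)$. Because $P(S)$ is projective and $h$ is an epimorphism, $h$ splits, giving a section $s: P(S) \to M$ with $hs = 1_{P(S)}$. Consequently $M = s(P(S)) \oplus \Ker(h)$, and $s(P(S)) \cong P(S)$. Setting $K = \Ker(h)$ yields the desired decomposition $M = P(S) \oplus K$, completing the second case and hence the proposition.

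The main obstacle, and the crux of the whole argument, is the step showing $h$ is surjective via the superfluousness of $\Ker(g) = \rad(P(S))$; this is exactly where semiperfectness is essential, since it guarantees the cover is local (so the kernel is a superfluous submodule) rather than merely a projective surjection. Everything else is routine: the fact that a nonzero map into a simple module is onto, the lifting supplied directly by the subprojectivity hypothesis through Lemma \ref{onlyfrees}, and the splitting of an epimorphism onto a projective module. I should take care to state that in the first case $\zhom{M}{S} = 0$ the conclusion holds vacuously, so that the dichotomy in the statement is genuinely exhaustive.
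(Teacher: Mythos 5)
Your proof is correct and follows essentially the same route as the paper: lift the nonzero map $f:M\to S$ through the projective cover $P(S)\to S$ using $S\in\subpr{M}$, show the lift is onto, and split it by projectivity of $P(S)$. The only cosmetic difference is in establishing surjectivity of the lift: you use superfluousness of $\Ker(g)=\rad(P(S))$, while the paper observes that the image cannot lie in the unique maximal submodule of the local module $P(S)$ --- two equivalent facets of the same projective-cover property.
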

\begin{proof}
Assume $\zhom{M}{S} \not= 0$, and let $f: M \rightarrow S$ be a nonzero morphism. By hypothesis, we can lift this morphism to a morphism $\overline{f}: M \rightarrow P(S)$. Now, $\overline{f}(M)$ cannot be contained in the unique maximal ideal of $P(S)$, which is the kernel of the epimorphism $P(S) \rightarrow S$. Then, $\overline{f}(M) = P(S)$ and, by the projectivity of $P(S)$, we conclude that $M = P(S) \oplus K$.
\end{proof}

\begin{cor}\label{2}
Let $R$ be a semiperfect ring and let $M$ be a finitely generated right $R$-module of finite uniform dimension that is subprojective with respect to every simple (equivalently, with respect to every semisimple) module. Then, $M$ is projective.
\end{cor}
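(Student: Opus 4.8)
The plan is to combine Proposition \ref{1} with the hypothesis of finite uniform dimension to force $M$ to split off enough projective covers that its torsion part vanishes. First I would invoke Proposition \ref{1}: for each simple module $S$ with $\zhom{M}{S} \neq 0$, since $S \in \subpr{M}$ by hypothesis, we get a decomposition $M = P(S) \oplus K$, where $P(S)$ is the projective cover of $S$. The idea is to peel off such a summand and repeat the argument on $K$. Since $K$ is again a direct summand of the finitely generated module $M$ over a semiperfect ring, $K$ is finitely generated, and subprojectivity domains behave well under direct summands: by Proposition \ref{subprojectivity domains of sums}, $\subpr{M} = \subpr{P(S)} \cap \subpr{K} \subseteq \subpr{K}$, so $K$ too is subprojective relative to every simple module. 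Thus the hypotheses are inherited by $K$, and the process can be iterated.

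The key step is to show this peeling terminates. Here the finite uniform dimension hypothesis does the work: each projective cover $P(S)$ of a simple module over a semiperfect ring is local, hence has simple (in particular nonzero, uniform) socle, so splitting off $P(S)$ strictly drops the uniform dimension, or at least cannot continue indefinitely. More precisely, since $P(S)$ is a local (hollow) module it is uniform, so $M = P(S) \oplus K$ exhibits $P(S)$ as a uniform summand, and $\dim_u(M) = \dim_u(P(S)) + \dim_u(K) > \dim_u(K)$. By induction on the uniform dimension, after finitely many steps we arrive at a summand $K$ with $\zhom{K}{S} = 0$ for \emph{every} simple $S$. Over a semiperfect ring a finitely generated module is semilocal, so $K/\rad(K)$ is a nonzero semisimple module whenever $K \neq 0$; hence $\zhom{K}{S} = 0$ for all simple $S$ forces $\rad(K) = K$ together with $K$ finitely generated, and by Nakayama's lemma this gives $K = 0$.

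Therefore $M$ is a finite direct sum of projective covers $P(S)$ of simple modules, each of which is projective, so $M$ itself is projective, as desired. The main obstacle I anticipate is making the termination argument fully rigorous: I must confirm that splitting off $P(S)$ genuinely decreases the uniform dimension (i.e.\ that $P(S)$ has nonzero uniform dimension and that uniform dimension is additive over the relevant direct sum), and that the remaining summand $K$ indeed inherits both finite generation and finite uniform dimension so the induction is well-founded. The parenthetical equivalence between subprojectivity with respect to every simple module and with respect to every semisimple module follows from Proposition \ref{closure of finite direct sums}, since semisimple modules of finite length are finite direct sums of simples (and one reduces to the finite case because any map out of the finitely generated $M$ lands in a finitely generated, hence finite-length, semisimple submodule).
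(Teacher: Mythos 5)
Your proof is correct and follows essentially the same route as the paper: iterate Proposition \ref{1}, use finite uniform dimension to force termination, and then rule out a nonzero leftover summand $K$ by producing a nonzero map from $K$ to a simple module (the paper does this by noting the finitely generated nonzero $K$ would have a maximal submodule; your radical/Nakayama variant over a semiperfect ring accomplishes the same thing). One correction, though it does not damage the argument: your claims that a local (hollow) module is uniform and that $P(S)$ has simple socle are both false in general --- for example, over $R = k[x,y]/(x,y)^2$ the projective cover of the unique simple module is $R$ itself, which is local but has two-dimensional socle, hence uniform dimension $2$ and is not uniform, since $xR \cap yR = 0$. What the termination step actually needs is exactly the weaker point you flagged as requiring confirmation: $P(S) \neq 0$, so $P(S)$ has uniform dimension at least $1$, and uniform dimension is additive on direct sums, so each splitting $M = P(S) \oplus K$ strictly decreases the uniform dimension of the complement. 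With that substitution the induction is well-founded and your proof is complete; your treatment of the simple/semisimple parenthetical via finite generation likewise matches the paper's unnamed proposition on $\bigoplus_{i \in I} A_i$-subprojectivity of finitely generated modules.
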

\begin{proof}
Iterating the process of Proposition \ref{1}, and using the fact that $M$ has finite uniform dimension, we have that $M = P \oplus K$, with $P$ projective and $K$ has no nonzero morphisms to a simple module. If $K \not= 0$ then $K$ has maximal submodules, a contradiction. Hence, $M = P$
\end{proof}

Note that the conclusion of Corollary \ref{2} may hold even if $R$ is not semiperfect. For example, by Proposition \ref{fg abelian groups}, it holds for the ring of integers $\Z$. \\

If we assume that $R$ is right perfect, we can drop the finitely generated assumption in Corollary \ref{2}, as the existence of a maximal submodule of $K$ is guaranteed by the conditions on $R$. Then, we have the following.

\begin{cor}\label{2.1}
Let $R$ be a right perfect ring and let $M$ be a right $R$-module of finite uniform dimension that is subprojective with respect to every semisimple module. Then, $M$ is projective.
\end{cor}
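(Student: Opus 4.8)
The plan is to run the same inductive splitting as in the proof of Corollary \ref{2}, using Proposition \ref{1} repeatedly, and to replace the appeal to finite generation (which was used there only to guarantee that a nonzero module has a maximal submodule) by the corresponding property of right perfect rings. Since a right perfect ring is in particular semiperfect, Proposition \ref{1} is available: as $M$ is subprojective relative to every semisimple module it is subprojective relative to every simple module, so for each simple $S$ either $\zhom{M}{S} = 0$ or $M = P(S) \oplus K$, where $P(S)$ is the projective cover of $S$.

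First I would set up the iteration. If $\zhom{M}{S} \neq 0$ for some simple $S$, Proposition \ref{1} produces a decomposition $M = P(S) \oplus K$. The complementary summand $K$ still satisfies the hypothesis: by Proposition \ref{subprojectivity domains of sums}, $\subpr{M} = \subpr{P(S)} \cap \subpr{K} \subseteq \subpr{K}$, so every semisimple module lies in $\subpr{K}$ as well, and Proposition \ref{1} may be applied to $K$ in turn. The key point making this terminate is that uniform dimension is additive over finite direct sums while each $P(S)$ is nonzero, hence of uniform dimension at least $1$; thus every split strictly decreases the (finite) uniform dimension of the complementary summand. After at most (uniform dimension of $M$) steps the process halts, leaving $M = P \oplus K$ with $P = P(S_1) \oplus \cdots \oplus P(S_n)$ projective and $\zhom{K}{S} = 0$ for every simple module $S$.

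To conclude I would show $K = 0$. The condition $\zhom{K}{S} = 0$ for all simple $S$ says precisely that $K$ has no maximal submodule, i.e. $\rad(K) = K$. This is where right perfectness replaces finite generation: for a right perfect ring $R/J(R)$ is semisimple and $J(R)$ is right $T$-nilpotent, so $\rad(K) = KJ(R)$ is superfluous in $K$, and a superfluous submodule that equals the whole module forces $K = 0$. (Equivalently, a right perfect ring is a right max ring, so a nonzero $K$ would possess a maximal submodule and hence a nonzero map onto a simple module, contradicting $\zhom{K}{S} = 0$.) Therefore $K = 0$ and $M = P$ is projective.

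The only genuinely new ingredient compared with Corollary \ref{2} is this last step, so that is where I would concentrate: isolating and justifying the right-perfect property that every nonzero module admits a maximal submodule, which lets us discard the remainder $K$ without assuming $M$ finitely generated. The iteration itself is routine once one observes that it is finite \emph{uniform} dimension — rather than finite generation — that guarantees termination, and that each complementary summand inherits the subprojectivity hypothesis through Proposition \ref{subprojectivity domains of sums}.
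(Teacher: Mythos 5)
Your proposal is correct and takes essentially the same approach as the paper: the paper's (implicit) proof of Corollary \ref{2.1} is exactly the preceding remark that the iteration of Proposition \ref{1} from Corollary \ref{2} goes through verbatim, with finite generation replaced by Bass's theorem that over a right perfect ring every nonzero module has a maximal submodule, which is precisely the substitution you make. Your write-up simply supplies details the paper leaves implicit (termination via additivity of uniform dimension, inheritance of the hypothesis by the complementary summand via Proposition \ref{subprojectivity domains of sums}).
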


If, moreover, we assume that our ring is semiprimary, right hereditary, left coherent, we can also remove the finite uniform dimension assumption in Corollary \ref{2.1}. 

\begin{prop}\label{4.19}
Let $R$ be a semiprimary, right hereditary, left coherent ring. Then, a right module $M$ is projective if and only if it is strongly soc-projective.
\end{prop}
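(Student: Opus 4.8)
The forward implication is immediate: if $M$ is projective then $\subpr{M} = \modR$, so in particular every semisimple module lies in $\subpr{M}$ and $M$ is strongly soc-projective. The plan for the converse is to reduce $M$ to its ``torsion part'' with respect to the torsion theory whose torsion-free class is $\mathfrak{P}$, the class of projective modules, and then to derive a contradiction from the semiperfect structure of $R$.

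First I would invoke Proposition \ref{refsug} to note that $R$ is also right perfect, so that (as in the discussion preceding Proposition \ref{4.6}) $\mathfrak{P}$ is genuinely a torsion-free class; write $\mathfrak{T}$ for the corresponding torsion class. Following the construction in the proof of Proposition \ref{everyspbasic}, let $M'$ be the smallest submodule of $M$ with $M/M'$ projective. Then $M/M'$ is projective, the quotient map splits, and one writes $M \cong M/M' \oplus K$ with $K \in \mathfrak{T}$, that is $\zhom{K}{R} = 0$. By Proposition \ref{subprojectivity domains of sums} together with $M/M' \in \mathfrak{P}$ we have $\subpr{M} = \subpr{K}$, so $K$ inherits the hypothesis: $K$ is strongly soc-projective.

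The crux is to show $K = 0$. Suppose not. Since $R$ is right perfect, every nonzero module has a superfluous radical and hence a maximal submodule, so $K$ admits a nonzero morphism $f: K \rightarrow S$ onto a simple module $S$; in particular $\zhom{K}{S} \not= 0$. Because $K$ is strongly soc-projective we have $S \in \subpr{K}$, so Proposition \ref{1} (applicable as $R$ is semiperfect) forces $K = P(S) \oplus K'$, where $P(S)$ is the projective cover of $S$. But $P(S)$ is a nonzero projective module, hence a direct summand of a free module, so $\zhom{P(S)}{R} \not= 0$ and therefore $\zhom{K}{R} \not= 0$, contradicting $K \in \mathfrak{T}$. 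Thus $K = 0$ and $M \cong M/M'$ is projective.

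The only real obstacle is removing the finite uniform dimension hypothesis present in Corollary \ref{2.1}: there, finiteness was used to guarantee that iterating Proposition \ref{1} terminates. Here that role is played instead by perfectness, which supplies maximal submodules of arbitrary nonzero modules, together with the torsion-theoretic decomposition, which isolates in a single step a module $K$ with no projective quotients on which the semiperfect splitting argument runs directly.
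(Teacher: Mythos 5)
Your proposal is correct and follows essentially the same route as the paper: decompose $M \cong P \oplus K$ using the smallest submodule yielding a projective quotient (so that $\subpr{M} = \subpr{K}$ and $\zhom{K}{R} = 0$), use right perfectness to produce a maximal submodule of a nonzero $K$, and apply Proposition \ref{1} to split off a projective cover $P(S)$, contradicting $K$ having no projective quotients. Your phrasing of the final contradiction via $\zhom{P(S)}{R} \neq 0$ is just an equivalent reformulation of the paper's statement that $P(S)$ being a summand of $K$ contradicts $K \in \mathfrak{T}$.
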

\begin{proof}
As we've seen, in this case every module has a smallest module that yields a projective quotient. Then, we can decompose every module $M$ as $M \cong P \oplus X$, with $P$ projective and $X$ a module without projective quotients. Moreover, since $R$ is right perfect, every nonzero module has maximal submodules, cf. \cite{bass60}. Then, if $X \not= 0$ there exists a simple module $S$ such that $\zhom{X}{S} \not= 0$. Since $\subpr{M} = \subpr{X}$, $S \in \subpr{X}$. This implies, by Proposition \ref{1}, that $P(S)$ is a direct summand of $X$, a contradiction. Hence, $X = 0$ and $M \cong P$ is projective.
\end{proof}
\\

\textbf{Acknowledgment.} The authors thank the anonymous referee for many valuable comments and suggestions that allowed us to improve the exposition of this paper.

\end{document}